\newtheorem{theorem}{Theorem}
\newtheorem{lemma}[theorem]{Lemma}
\newenvironment{proof}{\noindent{\bf Proof.}}{\hspace*{2mm}~$\square$}
\newcommand{\N}{\mathbb{N}}
\newcommand{\Z}{\mathbb{Z}}
\newcommand{\G}{\mathscr{G}}
\newcommand{\V}{\mathscr{V}}
\newcommand{\E}{\mathscr{E}}
\newcommand{\ind}{\mathbf{1}}
\newcommand{\n}{\hspace*{-5pt}}
\DeclareMathOperator{\card}{card}
\DeclareMathOperator{\uniform}{Uniform \,}
\DeclareMathOperator{\geometric}{Geometric}
\begin{document}

\begin{frontmatter}
\title     {Distribution of money on connected graphs \\ with multiple banks}
\runtitle  {Distribution of money with multiple banks}
\author    {Nicolas Lanchier\thanks{Nicolas Lanchier was partially supported by NSF grant CNS-2000792.} and Stephanie Reed}
\runauthor {Nicolas Lanchier and Stephanie Reed}
\address   {School of Mathematical and Statistical Sciences \\ Arizona State University \\ Tempe, AZ 85287, USA. \\ nicolas.lanchier@asu.edu}
\address   {Mathematics Department \\ California State University \\ Fullerton, CA 92834, USA. \\ sjreed@fullerton.edu}

\maketitle

\begin{abstract} \ \
 This paper studies an interacting particle system of interest in econophysics inspired from a model introduced in the physics literature.
 The original model consists of the customers of a single bank characterized by their capital, and the discrete-time dynamics consists of monetary transactions in which a random individual~$x$ gives one coin to another random individual $y$, the transaction being canceled when $x$ is in debt and there is no more coins to borrow from the bank.
 Using a combination of numerical simulations and heuristic arguments, physicists conjectured that the distribution of money~(the distribution of the number of coins owned by a given individual) at equilibrium converges to an asymmetric Laplace distribution in the large population/temperature limit.
 In this paper, we prove and extend this conjecture to a more general model including multiple banks and interactions among customers across banks.
 More importantly, our model assumes that customers are located on a general undirected connected graph~(as opposed to the complete graph in the original model) where neighbors are interpreted as business partners, and transactions occur along the edges, thus modeling the flow of money across a social network.
 We show the convergence to the asymmetric Laplace distribution in the large population/temperature limit for any graph, thus proving and extending the conjecture from the physicists, and derive an exact expression of the distribution of money for all population sizes and money temperatures. \\
\end{abstract}

\begin{keyword}[class=AMS]
\kwd[Primary ]{60K35, 91B72}
\end{keyword}

\begin{keyword}
\kwd{Interacting particle systems, econophysics, distribution of money, models with banks.}
\end{keyword}

\end{frontmatter}


\section{Introduction}
\label{sec:intro}
 The main objective of this paper is to prove (and extend) the conjecture from~\cite{xi_ding_wang_2005} about an interacting particle system of interest in econophysics, the sub field of statistical mechanics that applies traditional concepts from physics to model and study economical systems.
 Many such stochastic models are reviewed in~\cite{yakovenko_et_al_2009} and consist of large populations of individuals characterized by the amount of money they possess, which we identify as a \emph{number of coins}.
 In all these models, two individuals, say~$x$ and~$y$, are chosen uniformly at random and sequentially from the entire population at each time step to engage in a monetary transaction, and the models only differ in the exchange rules at each interaction.
 From the point of view of econophysics, the individuals can be thought of as particles, money as energy, and the mean number of coins per individual as the temperature, also referred to as the \emph{money temperature}. 
 The objective of research in this field is to determine the so-called \emph{distribution of money}, i.e., the fraction of individuals that have a given number of coins at equilibrium and in the large population and large money temperature limit.
 The exchange rules reviewed in~\cite{yakovenko_et_al_2009} along with the corresponding conjectures found by physicists based on numerical simulations and/or heuristic arguments are the following. \vspace*{5pt} \\
\noindent{\bf One-coin model}~\cite{dragulescu_yakovenko_2000}.
 In this model, individual~$x$ gives one coin to individual~$y$ if she has at least one coin, and it was conjectured that, in the large population/temperature limit, the distribution of money converges to the exponential distribution with mean the money temperature. \vspace*{5pt} \\
\noindent{\bf Uniform reshuffling model}~\cite{dragulescu_yakovenko_2000}.
 In this case, all the coins of individuals~$x$ and~$y$ are uniformly redistributed between the two interacting individuals, and it was conjectured that, in the large population/temperature limit, the distribution of money converges to the exponential distribution with mean the money temperature, just like in the one-coin model. \vspace*{5pt} \\
\noindent{\bf Immediate exchange model}~\cite{heinsalu_patriarca_2014, katriel_2015}.
 In this model, individuals~$x$ and~$y$ choose independently and uniformly a random number of their coins to give to the other individual, and it was conjectured that the distribution of money now converges to a gamma distribution with mean the money temperature and shape parameter two in the large population/temperature limit. \vspace*{5pt} \\
\noindent{\bf Saving propensity model}~\cite{chakraborti_chakrabarti_2000, patriarca_chakraborti_kaski_2004}.
 Under this rule, individuals~$x$ and~$y$ independently save a random number of their coins while the remaining coins are uniformly redistributed between the two agents, just like in the uniform reshuffling model.
 In this case, it was conjectured that the distribution of money converges to a gamma distribution with mean the money temperature and shape parameter two, just like in the immediate exchange model. \vspace*{5pt} \\
\noindent{\bf Individual debt model}~\cite{dragulescu_yakovenko_2000}.
 For this model, the rule at each interaction is the same as in the one-coin model except that individual~$x$ can now go into debt with an individual limit of~$L$ coins.
 In this case, it was conjectured that the distribution of money in the large population/temperature limit converges to a shifted exponential distribution shifted by~$- L$ coins. \vspace*{5pt} \\
\noindent{\bf Bank model}~\cite{xi_ding_wang_2005}.
 The rule of this model is again the same as in the one-coin model except that individual~$x$ can now borrow money from a bank provided the bank is not empty, in which case it was conjectured that, in the large population/temperature limit, the distribution of money converges to an asymmetric Laplace distribution. \vspace*{5pt} \\
\noindent Convergence of the one-coin model to the exponential distribution in the large population and large temperature limit was proved rigorously in~\cite{lanchier_2017b}.
 The authors also proved the conjectures about the uniform reshuffling, immediate exchange and saving propensity models in~\cite{lanchier_reed_2018} as well as the conjecture about the individual debt model in~\cite{lanchier_reed_2019}.
 While we also studied in~\cite{lanchier_reed_2019} some of the aspects of the bank model, we were unable to establish the convergence to the asymmetric Laplace distribution conjectured in~\cite{xi_ding_wang_2005}.
 In this paper, we prove this conjecture but also extend the result to a more general and realistic model that includes multiple banks and an explicit spatial structure.
 More precisely, the individuals in our model are now located on a general connected graph and pairs of individuals can only interact~(exchange money) if they are connected by an edge, which contrasts with the six models above that were studied by physicists via numerical simulations on the complete graph where individuals interact globally.
 This underlying graph has to be thought of as a social network where edges represent links between business partners.
 

\section{Model description and main results}
\label{sec:model}
 To define the model with multiple banks, we start by letting~$\G = (\V, \E)$ be a general finite connected graph.
 This graph represents a social network where each vertex~$x \in \V$ is interpreted as an individual while an edge~$(x, y) \in \E$ indicates that~$x$ and~$y$ are business partners.
 The topology of the graph is incorporated in the dynamics of the economical system by assuming that the flow of money can only occurs through the edges, meaning that only business partners can interact to exchange money.
 We also assume that the system includes~$K$ banks and that each individual is a customer at exactly one of the banks:
 let~$\V_1, \V_2, \ldots, \V_K$ be a partition of the vertex set and assume that each~$x \in \V_i$ is a customer at bank~$i$ only.
 Individuals or vertices are characterized by the number of coins they possess, which is negative for individuals in debt who borrowed money from their bank.
 More precisely, the state at time~$t \in \N$ is a spatial configuration
 $$ \xi_t : \V \to \Z \quad \hbox{where} \quad \xi_t (x) = \left\{\begin{array}{l}
        + \ \hbox{number of coins~$x$ possesses when~$\xi_t (x) \geq 0$} \vspace*{2pt} \\
        - \ \hbox{number of coins~$x$ borrowed  when~$\xi_t (x) \leq 0$}. \end{array} \right. $$
 Let~$M$ be the number of coins in the population,~$N_i$ be the number of customers of bank~$i$, and~$R_i$ be the number of coins in bank~$i$ initially, and
 $$ N = N_1 + N_2 + \cdots + N_K \quad \hbox{and} \quad R = R_1 + R_2 + \cdots + R_K $$
 be respectively the number of vertices (population size) and the total number of coins in all the banks initially.
 From now on, we also let
 $$ T = M / N = \hbox{money temperature} \quad \hbox{and} \quad \rho_i = R_i / M \quad \hbox{for} \quad i = 1, 2, \ldots, K. $$
 At each time step, we choose two neighbors, say~$x$ and~$y$, sequentially uniformly at random, or equivalently an oriented edge~$\vec{xy}$ uniformly at random, and think of~$x$ as a potential buyer and~$y$ as a potential seller.
 Assuming that~$x \in \V_i$ and~$y \in \V_j$, we move one coin~$x \to y$ if there is
\begin{itemize}
 \item at least one coin at vertex~$x$ or \vspace*{2pt}
 \item less than one coin at vertex~$x$ but at least one coin in bank~$i$ that~$x$ can borrow.
\end{itemize}
 Otherwise, the buyer~$x$ is unable to pay and cannot borrow money so the transaction is canceled and nothing happens.
 In case a transfer of money does happen, the seller~$y$ either
\begin{itemize}
 \item increases her capital by one coin or \vspace*{2pt}
 \item decreases her debt by one coin by returning the coin to bank~$j$.
\end{itemize}
 To study the distribution of money, we first prove that the process~$(\xi_t)$ is ergodic and reversible, and that its unique stationary distribution is the uniform distribution on the set of admissible configurations, i.e., the configurations such that the total fortune of all the individuals equals~$M$ and the total debt of all the customers of bank~$i$ does not exceed~$R_i$.
 In particular, computing the probability that an individual, say~$x$, possesses~$c$ coins at equilibrium reduces to counting the total number of configurations and the number of configurations with exactly~$c$ coins at vertex~$x$.
 Using a combinatorial argument, we can prove the following general result which holds for all possible choices of~$M, N_i$ and~$R_i$, not just in the large population/temperature limit.
\begin{theorem} --
\label{th:combinatorics}
 Let~$e_j$ be the~$j$th unit vector in~$\Z^K$.
 For all connected graph~$\G$, all~$M, N_i$ and~$R_i$, and all~$x \in \V_j$, the probability that vertex~$x$ has~$c$ coins at equilibrium is given by
 $$ \begin{array}{ccl}
    \displaystyle \frac{\Lambda ((N_1, \ldots, N_K) - e_j, (R_1, \ldots, R_K), M - c)}{\Lambda ((N_1, \ldots, N_K), (R_1, \ldots, R_K), M)} & \quad
    \hbox{for all} & 0 \leq c \leq M \vspace*{8pt} \\
    \displaystyle \frac{\Lambda ((N_1, \ldots, N_K) - e_j, (R_1, \ldots, R_K) + c e_j, M - c)}{\Lambda ((N_1, \ldots, N_K), (R_1, \ldots, R_K), M)} & \quad
    \hbox{for all} & 0 \geq c \geq - R_j \end{array} $$
 where the function~$\Lambda : \N^K \times \N^K \times \N$ is defined as
 $$ \begin{array}{rcl}
    \Lambda (N_i, R_i, M) & \n = \n &
    \displaystyle \sum_{a_1 = 0}^{R_1} \cdots \sum_{a_K = 0}^{R_K} \ \sum_{b_1 = 0}^{N_1} \cdots \sum_{b_K = 0}^{N_K}
    \displaystyle {N_1 \choose b_1} \cdots {N_K \choose b_K} \vspace*{8pt} \\ && \hspace*{25pt}
    \displaystyle {a_1 - 1 \choose b_1 - 1} \cdots {a_K - 1 \choose b_K - 1} {M + a_1 + \cdots + a_K + N - b_1 - \cdots - b_K - 1 \choose N - b_1 - \cdots - b_K - 1}. \end{array} $$
\end{theorem}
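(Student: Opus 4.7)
The plan is to leverage the preceding claim that the stationary distribution of $(\xi_t)$ is uniform on the set of admissible configurations. Once this is granted, the probability in question becomes a ratio of cardinalities: the number of admissible configurations with $\xi(x) = c$ at the designated vertex, divided by the total number of admissible configurations. The bulk of the work therefore reduces to pure combinatorics, and the role of the graph $\G$ is felt only through the earlier ergodicity argument, not here.

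First I would verify that $\Lambda((N_1,\ldots,N_K), (R_1,\ldots,R_K), M)$ is exactly the cardinality of the admissible set. To do so I would stratify the admissible configurations by the pair $(b_i, a_i)$ associated to each bank~$i$, where $b_i$ is the number of customers of bank~$i$ who are in debt and $a_i \in \{0, 1, \ldots, R_i\}$ is the aggregate debt of those customers. The three groups of factors in the summand then have clean meanings: $\binom{N_i}{b_i}$ counts the ways to pick which $b_i$ of the $N_i$ customers of bank~$i$ are the debtors; $\binom{a_i-1}{b_i-1}$ counts compositions of $a_i$ into $b_i$ strictly positive parts, i.e., the assignments of at least one coin of debt to each chosen debtor; and once all debts are fixed, the remaining $N - \sum_i b_i$ non-debtors collectively own $M + \sum_i a_i$ coins, which by stars-and-bars can be distributed in $\binom{M + \sum_i a_i + N - \sum_i b_i - 1}{N - \sum_i b_i - 1}$ ways. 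Summing over the allowed ranges reproduces $\Lambda$.

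Second I would count the admissible configurations for which $\xi(x) = c$ at a fixed $x \in \V_j$. The standard reduction is to delete the vertex~$x$ and recount on the remaining system, whose parameters are adjusted according to the sign of~$c$. In both cases the number of bank~$j$ customers drops by one, so $N_i \mapsto N_i - \delta_{ij}$, and the remaining vertices must collectively hold $M - c$ coins. When $c \geq 0$ the vertex~$x$ is not a debtor, the bank reserves are untouched, and the count becomes $\Lambda((N_1,\ldots,N_K) - e_j, (R_1,\ldots,R_K), M-c)$. When $c \leq 0$ the vertex~$x$ has already absorbed $-c$ of the debt allowance of bank~$j$, leaving $R_j - (-c) = R_j + c$ available, so the count becomes $\Lambda((N_1,\ldots,N_K) - e_j, (R_1,\ldots,R_K) + c\,e_j, M-c)$. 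Dividing by $\Lambda((N_1,\ldots,N_K), (R_1,\ldots,R_K), M)$ then yields the two displayed formulas.

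I expect the main obstacle to be not any single deep step but rather the bookkeeping needed to make the identification between the admissible set and the sum defining $\Lambda$ fully rigorous, in particular the handling of degenerate cases. The case $b_i = 0$ must force $a_i = 0$, which happens via the conventions $\binom{-1}{-1} = 1$ and $\binom{a-1}{-1} = 0$ for $a \geq 1$, and the case $N - \sum_i b_i = 0$, where every vertex is a debtor, requires the convention $\binom{m-1}{-1} = \ind\{m = 0\}$ for the stars-and-bars term. Once these conventions are fixed the argument is a direct counting exercise, and one can proceed to the asymptotic analysis of the ratio in the subsequent sections of the paper.
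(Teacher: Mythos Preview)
Your proposal is correct and follows essentially the same route as the paper: the paper also reduces to counting configurations via the uniform stationary distribution, stratifies by the per-bank debtor count~$b_i$ and aggregate debt~$a_i$, and uses exactly the three factors $\binom{N_i}{b_i}$, $\binom{a_i-1}{b_i-1}$, and the stars-and-bars term to obtain $\Lambda$, then derives the conditional count by removing vertex~$x$ and adjusting $(N_j, R_j, M)$ according to the sign of~$c$. Your discussion of the boundary conventions (the cases $b_i = 0$ and $N - \sum_i b_i = 0$) is if anything more careful than the paper's, which handles them via the blanket conventions $\binom{-1}{k} = 1$ and $\binom{n}{k} = 0$ for $k < 0$ or $k > n$.
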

\begin{figure}[t]
\centering
\scalebox{0.32}{\input{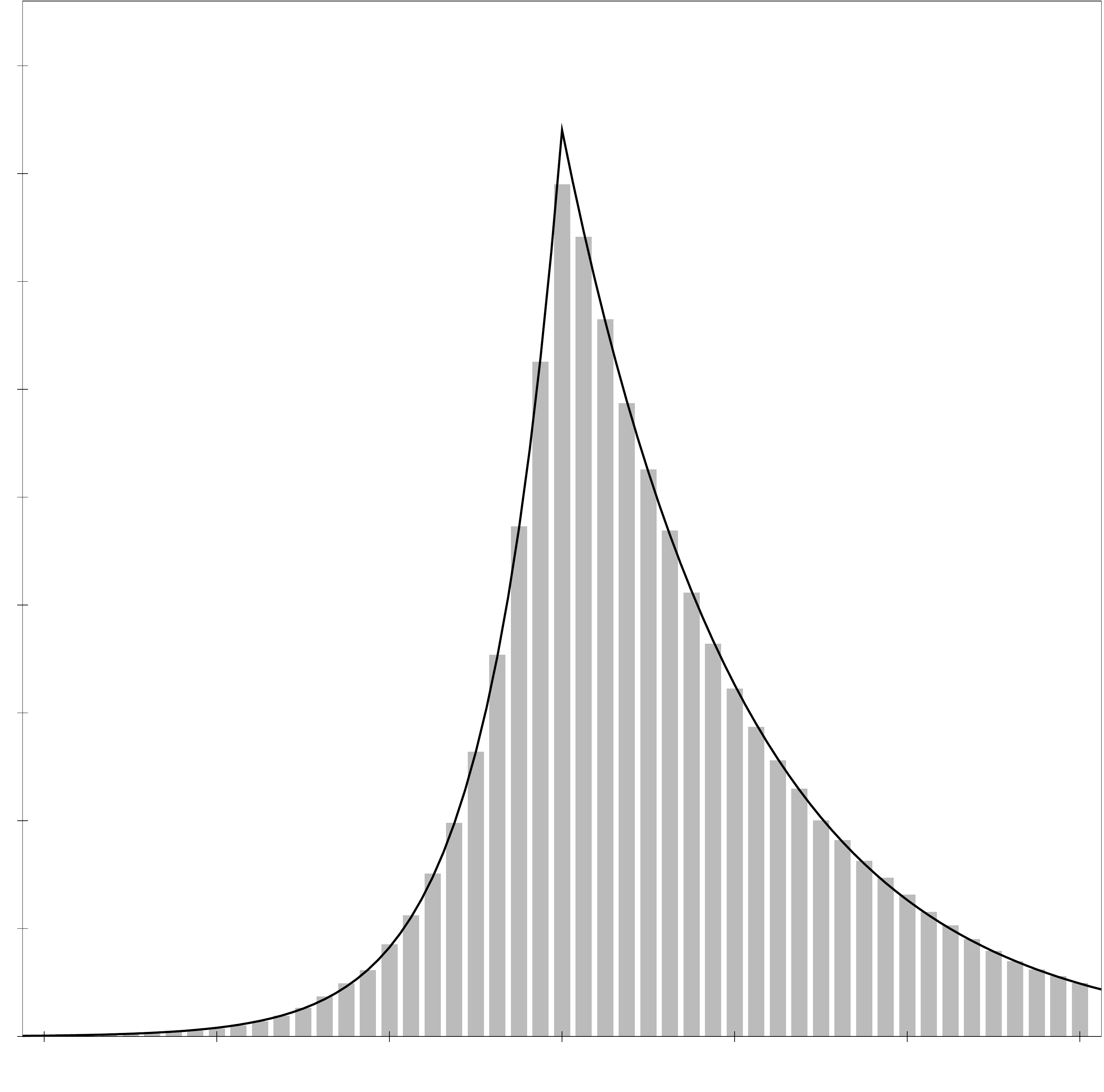_t}}
\caption{\upshape{Distribution of money for the model with two identical banks (with the same number of customers and the same initial number of coins).
 The grey histogram is obtained from numerical simulation of the stochastic process on the complete graph with~$N = 10$K vertices, $M = 5$M coins, and~$R = 1$M coins initially in the two banks.
 The solid curve represents the density function~$f$ found in Theorem~\ref{th:laplace}.}}
\label{fig:symmetry}
\end{figure}
\begin{figure}[t]
\centering
\scalebox{0.19}{\input{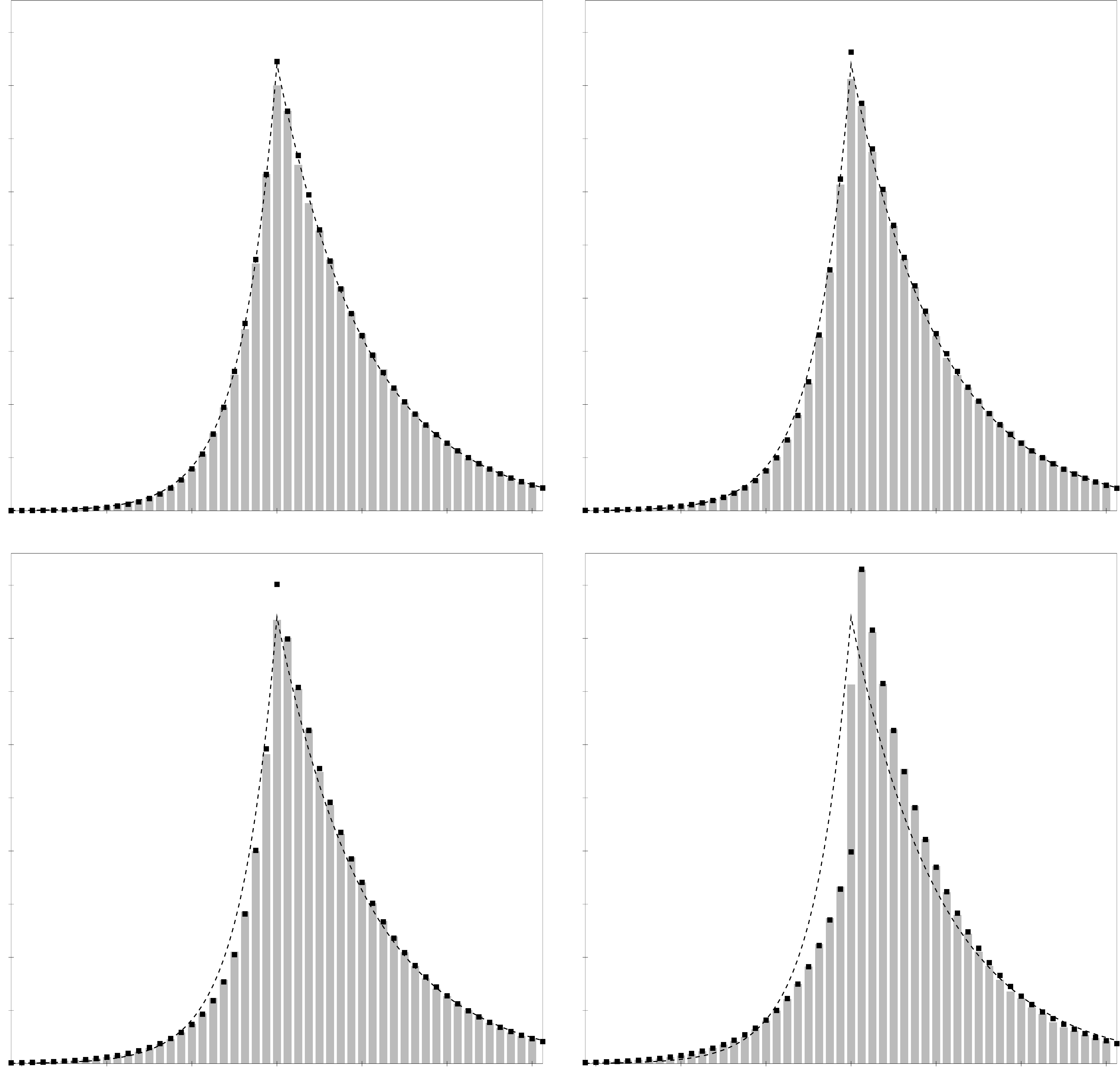_t}}
\caption{\upshape{Distribution of money for the model with two banks with the same number of customers but different initial numbers of coins ($\rho_1 \neq \rho_2$).
 The grey histogram is again obtained from numerical simulation of the stochastic process on the complete graph with~$N = 10$K vertices, $M = 5$M coins, and~$R = 1$M coins in the banks.
 The dashed curve is just a duplicate of the solid curve in Figure~\ref{fig:symmetry} displayed for comparison.
 The black squares are computed from combining~Theorem~\ref{th:combinatorics} and~\eqref{eq:convex} for~$N = 100$, $M = 50$K and~$R = 10$K.}}
\label{fig:asymmetry}
\end{figure}
 Note that, when the initial number of coins and/or the number of customers in each bank are different, the probability that a given individual has~$c$ coins depends on the bank she goes to.
 In particular, the distribution of money consists of a convex combination of the probabilities given in the previous theorem.
 More precisely, letting~$x_1 \in \V_1, \ldots, x_K \in \V_K$, the fraction of individuals with~$c$ coins at equilibrium is given by
\begin{equation}
\label{eq:convex}
  \begin{array}{rcl} f (c) & \n = \n &
  \displaystyle \frac{1}{N} \,\sum_{x \in \V} \ \lim_{t \to \infty} P (\xi_t (x) = c) =
  \displaystyle \sum_{j = 1}^K \ \frac{N_j}{N} \ \lim_{t \to \infty} P (\xi_t (x_j) = c) \vspace*{8pt} \\ & \n = \n &
  \displaystyle \sum_{j = 1}^K \ \frac{N_j}{N} \
  \displaystyle \frac{\Lambda ((N_1, \ldots, N_K) - e_j, (R_1, \ldots, R_K), M - c)}{\Lambda ((N_1, \ldots, N_K), (R_1, \ldots, R_K), M)} \
  \ind \{0 \leq c \leq M \} \vspace*{8pt} \\ && \hspace*{10pt} + \ 
  \displaystyle \sum_{j = 1}^K \ \frac{N_j}{N} \
  \displaystyle \frac{\Lambda ((N_1, \ldots, N_K) - e_j, (R_1, \ldots, R_K) + c e_j, M - c)}{\Lambda ((N_1, \ldots, N_K), (R_1, \ldots, R_K), M)} \
  \ind \{0 > c \geq - R_j \}. \end{array}
\end{equation}
 Looking now at the distribution of money in the large population/temperature limit, one natural approach to prove the conjecture in~\cite{xi_ding_wang_2005} is to study the right-hand side of equation~\eqref{eq:convex} when both the population size~$N$ and the money temperature~$T = M/N$ are large.
 However, we were not able to simplify this expression enough to deduce convergence to an asymmetric Laplace distribution so we use a completely different approach.
 Because the convergence of the process~$(\xi_t)$ to the uniform distribution on the set of configurations holds \emph{regardless} of the choice of the connected graph, we may focus on the complete graph, in which case the limiting behavior for large populations is well approximated by a system of coupled differential equations.
 The analysis of the fixed point(s) of this system shows convergence to an asymmetric Laplace distribution.
 To find the value of the three parameters of the Laplace distribution, we use the conservation of the money temperature and prove that, in the symmetric case where all the banks have the same number of customers and start with the same number of coins, the total number of coins in the banks in the large population limit becomes much smaller than~$R$.
 More precisely, we have the following theorem that extends the conjecture in~\cite{xi_ding_wang_2005} to multiple banks and general connected graphs.
\begin{theorem} --
\label{th:laplace}
 Assume~$N_i = N / K$ and~$R_i = R / K$.
 When~$N$ and~$T = M / N$ are large,
\begin{equation}
\label{eq:Laplace-1}
  \lim_{t \to \infty} P (\xi_t (x) = c) \approx f (c) = \left\{\hspace*{-3pt} \begin{array}{lcl} \mu \,e^{- ac} & \hbox{for} & c \geq 0 \vspace*{3pt} \\
                                                                                                 \mu \,e^{+ bc}  & \hbox{for} & c \leq 0 \end{array} \right.
\end{equation}
 where, letting~$\rho = R/M = (R_1 + \cdots + R_K) / M$,
\begin{equation}
\label{eq:Laplace-2}
  \mu = \frac{1}{T} \bigg(\sqrt{1 + \rho} - \sqrt{\rho} \bigg)^2, \qquad
    a = \frac{1}{T} \bigg(1 - \sqrt{\frac{\rho}{1 + \rho}} \bigg), \qquad
    b = \frac{1}{T} \bigg(\sqrt{\frac{1 + \rho}{\rho}} - 1 \bigg).
\end{equation}
\end{theorem}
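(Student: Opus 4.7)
The plan is to exploit the key feature identified in Theorem~\ref{th:combinatorics}: the stationary distribution is uniform on the set of admissible configurations, hence independent of the underlying graph $\G$. For the purpose of computing $\lim_{t\to\infty} P(\xi_t(x)=c)$ we may therefore replace $\G$ with the complete graph, which admits a tractable mean-field description in the large-$N$ regime.

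On the complete graph I would study the density $f(c,t)$ giving the fraction of individuals with $c$ coins, whose master equation at equilibrium reduces, by reversibility, to a detailed-balance relation between adjacent states:
\begin{equation*}
 f(c)\,\alpha \ = \ f(c+1)\,g(c+1), \qquad c \in \Z,
\end{equation*}
where $\alpha$ is the stationary probability that a uniformly chosen potential buyer is able to pay (from her own money or by borrowing), and $g(c)=1$ for $c\geq 1$ while $g(c)=\beta$ for $c\leq 0$, with $\beta$ the probability that a typical bank holds at least one coin. Solving the two geometric recurrences on either side of $c=0$ and passing to the continuum limit in the regime $T\to\infty$ produces exactly the asymmetric Laplace shape~\eqref{eq:Laplace-1}, with $a=-\ln\alpha$ and $b=\ln(\alpha/\beta)$.

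To identify the three parameters I would impose three global constraints on $f$. Normalization gives $\mu/a+\mu/b=1$, and conservation of money temperature, $\sum_x \xi_t(x)=M$, translates into $\mu/a^2-\mu/b^2=T$. The third equation, which is the only place where the symmetry assumption genuinely enters, is the bank-depletion statement announced by the authors: under $N_i=N/K$ and $R_i=R/K$, the expected total number of coins remaining in all banks at equilibrium is $o(R)$ as $N,T\to\infty$. Equivalently, the expected total debt is asymptotic to $R$, which in per-capita terms reads $\mu/b^2=R/N=\rho T$. Solving the resulting system and using the identity $1/(\sqrt{1+\rho}+\sqrt{\rho})=\sqrt{1+\rho}-\sqrt{\rho}$ then produces precisely the formulas~\eqref{eq:Laplace-2}.

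The main obstacle is the bank-depletion estimate itself: as Figure~\ref{fig:asymmetry} illustrates, without symmetry some banks retain a macroscopic share of their initial coins, so the estimate cannot come from purely general considerations. I would attack it through the combinatorial description of the stationary law: writing $B_i$ for the number of coins in bank $i$ at equilibrium, $E[B_i]$ equals $R_i$ minus the expected total debt of bank-$i$ customers under the uniform measure on admissible configurations, and the distributional symmetry $B_1 \stackrel{d}{=} \cdots \stackrel{d}{=} B_K$ reduces the task to a one-bank estimate. This expectation can be rewritten in terms of the function $\Lambda$ evaluated at shifted arguments, and a Laplace-type asymptotic analysis of the dominant terms in the multi-index sum defining $\Lambda$ should deliver $E[B_i]=o(R_i)$ in the regime $N,T\to\infty$ with $\rho$ fixed, thereby closing the system and completing the proof.
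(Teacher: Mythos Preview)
Your overall architecture matches the paper closely: reduce to the complete graph via the graph-independence of the stationary law, derive the two-sided geometric (asymmetric Laplace) shape from a mean-field fixed-point/detailed-balance computation, impose normalization and conservation of temperature to get $\mu/a+\mu/b=1$ and $\mu/a^2-\mu/b^2=T$, and then close the system with a bank-depletion statement yielding $\mu/b^2=\rho T$ (equivalently $\mu/a^2=(1+\rho)T$). The algebra in your final paragraph is the same as the paper's.

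The substantive divergence is in how you propose to obtain the bank-depletion estimate. You plan to extract $E[B_i]=o(R_i)$ from a Laplace-type asymptotic analysis of the multi-sum defining $\Lambda$. The authors explicitly report (just after Theorem~\ref{th:combinatorics}) that they attempted exactly this route---pushing the combinatorics of $\Lambda$ through the large-$N$, large-$T$ regime---and could not make it tractable; that is why they switch to a completely different argument. So your ``should deliver'' is optimistic: the sum over $(a_1,\dots,a_K,b_1,\dots,b_K)$ with products of binomials does not have an obvious dominant term uniformly in the regime you need, and you have not identified the saddle or the relevant scaling.

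What the paper does instead is a \emph{dynamical} argument on the complete graph. First, from the two constraints you already have, one shows algebraically that $\mu=C_0/T$ for some $C_0>0$ depending only on $\delta=b/a$ (this is the step you omit). This guarantees that near equilibrium $u_0\gtrsim 1/T$. Then one looks at the one-dimensional process $B_t(i)$ (coins in bank $i$): its upward and downward transition probabilities differ by a term of order $u_0/K\gtrsim 1/(KT)$, so $B_t(i)$ is dominated by a random walk with a negative drift of that size. A gambler's-ruin/optional-stopping estimate then shows that, started from $1$, $B_t(i)$ exceeds $\sqrt{R_i T}$ only with probability $\exp(-c\sqrt{\rho_i N})$, hence at equilibrium $B_t(i)$ lives on a scale $o(R)$. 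This bypasses the combinatorics entirely and is where the symmetry hypothesis is actually used (to make the $K$ banks exchangeable and reduce to a single drift estimate). If your Laplace asymptotics for $\Lambda$ does not go through, this drift argument is the replacement you need.
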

 For a picture of the Laplace distribution~\eqref{eq:Laplace-1} with parameters~\eqref{eq:Laplace-2} along with the distribution of money obtained from numerical simulation of the stochastic process on a large complete graph in the presence of two identical banks, we refer the reader to Figure~\ref{fig:symmetry}.
 We also used a computer program in Figure~\ref{fig:asymmetry} to display the distribution of money obtained from Theorem~\ref{th:combinatorics} and equation~\eqref{eq:convex} in the presence of two non-identical banks along with simulation of the process.
 The money temperature and the fraction~$\rho_1 + \rho_2$ of coins initially in the two banks are the same as in Figure~\ref{fig:symmetry} and the banks again have the same number of customers but the banks now start with different numbers of coins, meaning that~$\rho_1 \neq \rho_2$.
 The pictures show that, as the number of coins in the first bank increases and the number of coins in the second bank decreases, the number of individuals in debt at equilibrium decreases.
 In addition, the value of~$c$ at which the maximum of the distribution of money is reached shifts to the right, suggesting that the convergence to an asymmetric Laplace distribution in Theorem~\ref{th:laplace} does not hold in the presence of non-identical banks. 


\section{Convergence to the uniform distribution}
\label{sec:uniform}
 This section is devoted to collecting some key preliminary results that will be used later to prove the theorems.
 To begin with, we prove that the process is irreducible~(see Lemma~\ref{lem:irreducibility}) and aperiodic~(see Lemma~\ref{lem:aperiodicity}), and so converges to a unique stationary distribution~$\pi$ that does not depend on the initial configuration of coins.
 Note that this distribution is a probability measure on the set of configurations as opposed to the distribution of money which is defined as the distribution of the number of coins at a given vertex.
 Then, using reversibility, we deduce that the distribution~$\pi$ is uniform on the set of configurations~(see Lemma~\ref{lem:uniform}).
 First, we give an explicit expression of the transition probabilities of the process.
 This is done in the next lemma where~$\tau_{xy}$ is the operator on the set of configurations that moves one coin from vertex~$x$ to vertex~$y$, i.e.,
 $$ (\tau_{xy} \,\xi)(z) = (\xi (x) - 1) \,\ind \{z = x \} + (\xi (y) + 1) \,\ind \{z = y \} + \xi (z) \,\ind \{z \neq x, y \}, $$
 and where~$S$ denotes to the set of configurations, i.e.,
 $$ \begin{array}{l}
      S = \displaystyle \Bigg\{\xi \in \Z^{\V} : - R_i \leq \xi (x) \leq M + R \ \hbox{for all} \ x \in \V_i, \ \sum_{z \in \V} \,\xi (z) = M \vspace*{-4pt} \\ \hspace*{100pt}
          \displaystyle \hbox{and} \ \sum_{z \in \V_i} \,\xi (z) \,\ind \{\xi (z) < 0 \} \geq - R_i \ \hbox{for all} \ i = 1, 2, \ldots, K \Bigg\}. \end{array} $$
\begin{lemma} --
\label{lem:transition}
 For all~$(x, y) \in \V_i \times \V$,
 $$ p (\xi, \tau_{xy} \,\xi) = \frac{\ind \,\{(x, y) \in \E \}}{2 \card (\E)} \ \ \ind \bigg\{\xi (x) > 0 \ \hbox{or} \ \sum_{z \in \V_i} \,\xi (z) \,\ind \{\xi (z) < 0 \} > - R_i \bigg\}. $$
\end{lemma}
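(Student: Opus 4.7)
The plan is to derive the formula by matching the two ingredients of one time step with the two factors appearing in the lemma: the uniform random choice of an ordered pair of neighbors, and the deterministic test of whether the corresponding transaction is allowed. For the selection step, I would observe that picking two neighbors~$x, y$ sequentially uniformly at random is the same as picking an oriented edge uniformly from the $2\,\card(\E)$ oriented edges of~$\G$; this already gives the prefactor $\ind\{(x, y) \in \E\}/(2\,\card(\E))$ and automatically rules out $x = y$ since~$\G$ has no self-loops.

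For the dynamics, I would unpack the two bullets defining the update rule: the transaction $x \to y$ is executed exactly in one of two situations, namely either (a)~$\xi(x) \geq 1$, which since $\xi(x) \in \Z$ coincides with $\xi(x) > 0$, or (b)~$\xi(x) \leq 0$ and bank~$i$ still has at least one coin to lend, where $x \in \V_i$. To rewrite~(b) using the quantities in the lemma, I would note that each $z \in \V_i$ with $\xi(z) < 0$ has borrowed exactly $-\xi(z)$ coins, so the total amount already lent by bank~$i$ equals $-\sum_{z \in \V_i} \xi(z)\,\ind\{\xi(z) < 0\}$ and the number of coins still available in bank~$i$ equals $R_i + \sum_{z \in \V_i} \xi(z)\,\ind\{\xi(z) < 0\}$; by integrality, "at least one available coin" becomes $\sum_{z \in \V_i} \xi(z)\,\ind\{\xi(z) < 0\} > -R_i$. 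The union of cases~(a) and~(b) is then precisely the disjunction inside the indicator of the lemma, and multiplying by the selection probability from the first step yields the stated formula.

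There is no real difficulty here: the argument is essentially a line-by-line translation of the verbal dynamics into an algebraic indicator. The only mildly delicate point worth checking is that whenever the indicator equals~$1$ the new configuration $\tau_{xy}\,\xi$ still lies in the state space~$S$, which follows from a short case analysis on the signs of $\xi(x)$ and $\xi(y)$, using that the total~$M$ is preserved by~$\tau_{xy}$ and that bank~$i$'s debt constraint can only be saturated precisely when both sub-conditions inside the indicator fail simultaneously.
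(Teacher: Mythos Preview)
Your proposal is correct and follows essentially the same approach as the paper: identify the selection of an oriented edge as giving the $1/(2\,\card(\E))$ prefactor, and translate the verbal rule for when the transaction is executed into the disjunctive indicator condition. You even add the small extra observation that $\tau_{xy}\,\xi \in S$ whenever the indicator is~$1$, which the paper does not bother to spell out here.
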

\begin{proof}
 Let~$\xi \in S$ and~$(x, y) \in \V_i \times \V$.
 Then, $p (\xi, \tau_{xy} \,\xi) > 0$ if and only if
\begin{enumerate}
 \item the individual at~$x$ is not in debt or bank~$i$ is not empty, and \vspace*{2pt}
 \item vertices~$x$ and~$y$ are connected by an edge.
\end{enumerate}
 These conditions can be expressed mathematically as
\begin{equation}
\label{eq:transition-1}
  \bigg(\xi (x) > 0 \ \hbox{or} \ \sum_{z \in \V_i} \,\xi (z) \,\ind \{\xi (z) < 0 \} > - R_i \bigg) \quad \hbox{and} \quad (x, y) \in \E.
\end{equation}
 In addition, given that the conditions in~\eqref{eq:transition-1} hold, the probability of transitioning from configuration~$\xi$ to configuration~$\tau_{xy} \,\xi$ is the probability that the directed edge~$\vec{xy}$ is selected.
 Since each undirected edge results in two directed edges, this probability is equal to
\begin{equation}
\label{eq:transition-2}
  \frac{1}{\card \{x, y \in \V : (x, y) \in \E \}} = \frac{1}{2 \card (\E)}.
\end{equation}
 The result then follows from combining~\eqref{eq:transition-1} and~\eqref{eq:transition-2}.
\end{proof} \\ \\
 In order to prove irreducibility, we first use Lemma~\ref{lem:transition} to prove that if it is possible to move a coin through the directed edge~$\vec{xy}$ in one time step then, right after this move, it is possible to move a coin from vertex~$y$ to any of its neighbors~$z$, again in one time step.
\begin{lemma} --
\label{lem:move-edge}
 Let~$\xi \in S$ and~$(x, y) \in \E$ such that~$\xi' = \tau_{xy} \,\xi \in S$. Then,
 $$ p (\xi, \xi') > 0 \quad \hbox{implies that} \quad \tau_{yz} \,\xi' \in S \quad \hbox{and} \quad p (\xi', \tau_{yz} \,\xi') > 0. $$
\end{lemma}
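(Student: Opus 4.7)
The plan is to use Lemma~\ref{lem:transition} to reduce the conclusion to a simple check on the bank of~$y$, and then to track how the first transaction~$x \to y$ affects the available funds of that bank. Write~$y \in \V_j$, and, since~$z$ is implicitly a neighbor of~$y$, assume~$(y,z) \in \E$.

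By Lemma~\ref{lem:transition}, to establish $p(\xi',\tau_{yz}\xi') > 0$ it suffices to verify that either~$\xi'(y) > 0$ or bank~$j$ has strictly positive funds in~$\xi'$, i.e.,
$$ \sum_{u \in \V_j} \xi'(u) \ind\{\xi'(u) < 0\} > -R_j. $$
If~$\xi(y) \geq 0$, then~$\xi'(y) = \xi(y) + 1 \geq 1$ and the condition is immediate. The substantive case is~$\xi(y) \leq -1$, where the crux is to show that the move~$x \to y$ leaves bank~$j$ with at least one coin available to lend. I would split this into three sub-cases according to whether~$x$ uses bank~$j$: (i)~$x \notin \V_j$, in which case~$y$ returning one coin strictly decreases bank~$j$'s debt; (ii)~$x \in \V_j$ with~$\xi(x) > 0$, again strictly decreasing bank~$j$'s debt; (iii)~$x \in \V_j$ with~$\xi(x) \leq 0$, in which case~$x$ borrowed from bank~$j$, so that~$x$'s debt increased by~$1$ while~$y$'s decreased by~$1$, leaving bank~$j$'s debt unchanged---but the hypothesis~$p(\xi,\xi')>0$ combined with Lemma~\ref{lem:transition} forces bank~$j$ to have had strictly positive funds in~$\xi$ already. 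In each sub-case, bank~$j$ has at least one coin available in~$\xi'$.

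For~$\tau_{yz}\xi' \in S$: the total capital~$M$ is preserved; the bank constraint for~$\V_j$ holds because the transfer~$y \to z$ increases~$j$'s debt by at most~$1$, which is absorbed by the slack just established; the constraint for the bank of~$z$ only benefits from~$z$'s capital rising; and the individual bounds~$-R_i \leq \xi(u) \leq M+R$ are preserved since capital changes by~$\pm 1$ and the extreme value~$M+R$ would force all banks into maximum debt, a configuration incompatible with~$p(\xi,\xi')>0$ via Lemma~\ref{lem:transition}.

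The main---if mild---obstacle is sub-case~(iii): there the first transaction has no net effect on bank~$j$'s funds, and the conclusion is rescued only by invoking the fact that~$x$ had to borrow to perform the move, which by Lemma~\ref{lem:transition} means bank~$j$ was already non-empty in~$\xi$. Once this bookkeeping is in place, the remaining verifications are routine.
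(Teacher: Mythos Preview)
Your proposal is correct and follows essentially the same approach as the paper: both reduce, via Lemma~\ref{lem:transition}, to checking that either $\xi'(y)>0$ or bank~$j$ has funds in~$\xi'$, and both split the nontrivial case into $x\notin\V_j$, $x\in\V_j$ with $\xi(x)>0$, and $x\in\V_j$ with $\xi(x)\le 0$ (the paper labels these Case~1, Sub-case~2a, Sub-case~2b). The only cosmetic difference is that the paper does not separately verify $\tau_{yz}\,\xi'\in S$, relying implicitly on the fact that a positive transition probability already forces the target configuration to lie in~$S$.
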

\begin{proof}
 Letting~$x \in \V_i$ and~$y \in \V_j$, we prove the result by distinguishing two cases depending on whether the two vertices go to the same bank or not. \vspace*{5pt} \\
\noindent {\bf Case 1}. Different banks~$i \neq j$. \vspace*{5pt} \\
 In this case, after the first move, there will be one more coin in bank~$j$ in case~$y$ was in debt therefore~$y$ can borrow a coin from this bank if needed.
 We now turn the intuition into rigorous equations.
 According to Lemma~\ref{lem:transition}, it suffices to prove that
\begin{equation}
\label{eq:move-edge-1}
  \xi' (y) > 0 \qquad \hbox{or} \qquad \sum_{z \in \V_j} \,\xi' (z) \,\ind \{\xi' (z) < 0 \} > - R_j.
\end{equation}
 Assuming that~$\xi' (y) \leq 0$, we must have
 $$ \xi' (y) \,\ind \{\xi' (y) < 0 \} = \xi' (y) \,\ind \{\xi' (y) \leq 0 \} = \xi' (y) = \xi (y) + 1. $$
 Using also that~$\xi' \in S$, we deduce that
 $$ \begin{array}{l}
    \displaystyle \sum_{z \in \V_j} \,\xi' (z) \,\ind \{\xi' (z) < 0 \} =
    \displaystyle \sum_{z \in \V_j \setminus \{y \}} \,\xi (z) \,\ind \{\xi (z) < 0 \} + \xi (y) + 1 \vspace*{5pt} \\ \hspace*{60pt} \geq
    \displaystyle \sum_{z \in \V_j} \,\xi (z) \,\ind \{\xi (z) < 0 \} + 1 \geq - R_j + 1 > - R_j, \end{array} $$
 which shows that~\eqref{eq:move-edge-1} holds in the first case. \vspace*{5pt} \\
\noindent {\bf Case 2}. Same bank~$i = j$. \vspace*{5pt} \\
 In this case, either~$x$ is not in debt before the first move which brings one coin to either~$y$ or the common bank~(in either case~$y$ can then use this coin) or~$x$ is in debt before the move but the fact that the move occurs indicates that the bank is not empty.
 To turn this into a rigorous proof, we again assume that~$\xi' (y) \leq 0$, and observe that, according to Lemma~\ref{lem:transition},
\begin{equation}
\label{eq:move-edge-2}
  \xi (x) > 0 \qquad \hbox{or} \qquad \sum_{z \in \V_i} \,\xi (z) \,\ind \{\xi (z) < 0 \} > - R_i
\end{equation}
 because~ $p (\xi, \tau_{xy} \,\xi) > 0$.
 This leads to two sub-cases: \vspace*{5pt} \\
\noindent {\bf Sub-case 2a}. Assume that~$\xi (x) > 0$. \vspace*{5pt} \\
 The same reasoning as in the first case shows that the second inequality in~\eqref{eq:move-edge-1} holds. \vspace*{5pt} \\
\noindent {\bf Sub-case 2b}. Assume that~$\xi (x) \leq 0$. \vspace*{5pt} \\
 In this case, the second inequality in~\eqref{eq:move-edge-2} must hold and
 $$ \begin{array}{rcl}
    \xi' (x) \,\ind \{\xi' (x) < 0 \} & \n = \n & \xi' (x) \,\ind \{\xi (x) - 1 < 0 \} = \xi' (x) = \xi (x) - 1, \vspace*{4pt} \\
    \xi' (y) \,\ind \{\xi' (y) < 0 \} & \n = \n & \xi' (y) \,\ind \{\xi' (y) \leq 0 \} = \xi' (y) = \xi (y) + 1, \end{array} $$
 from which it follows that
 $$ \begin{array}{l}
    \displaystyle \sum_{z \in \V_i} \,\xi' (z) \,\ind \{\xi' (z) < 0 \} =
    \displaystyle \sum_{z \in \V_i \setminus \{x, y \}} \,\xi (z) \,\ind \{\xi (z) < 0 \} + (\xi (x) - 1) + (\xi (y) + 1) \vspace*{5pt} \\ \hspace*{60pt} \geq
    \displaystyle \sum_{z \in \V_i} \,\xi (z) \,\ind \{\xi (z) < 0 \} > - R_i. \end{array} $$
 This again shows that~\eqref{eq:move-edge-1} holds.
\end{proof} \\ \\
 We now use Lemma~\ref{lem:move-edge} to prove that if two configurations~$\xi$ and~$\xi'$ can be obtained from one another by moving one coin from~$x$ to~$y$ then the configurations communicate.
 In addition, the minimum number of time steps required is less than twice the number of vertices.
\begin{lemma} --
\label{lem:move-path}
 Let~$\xi \in S$ and~$x, y \in \V$ such that~$\xi' = \tau_{xy} \,\xi \in S$. Then,
 $$ p_t (\xi, \xi') > 0 \quad \hbox{for some} \quad t < 2N. $$
\end{lemma}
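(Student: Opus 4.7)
The plan is to exhibit, for any admissible $\xi \in S$ and $\xi' = \tau_{xy} \xi \in S$, an explicit sequence of fewer than $2N$ valid single-coin moves realizing the transition $\xi \to \xi'$. Let $i$ denote the bank index of $x$. We may assume $x \neq y$, for otherwise $\xi' = \xi$ and $p_0(\xi, \xi') = 1$. Let $P : x = x_0, x_1, \ldots, x_n = y$ be a shortest path from $x$ to $y$ in the connected graph $\G$, so that $n \leq N - 1$.

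First, if $\xi(x) > 0$ or $\sum_{z \in \V_i} \xi(z) \, \ind\{\xi(z) < 0\} > -R_i$, then Lemma~\ref{lem:transition} gives that the transition $\tau_{x_0 x_1}$ has positive probability from $\xi$. Setting $\xi^{(0)} = \xi$ and $\xi^{(k)} = \tau_{x_{k-1} x_k} \xi^{(k-1)}$ for $k = 1, \ldots, n$, an iterated application of Lemma~\ref{lem:move-edge} along the successive edges of $P$ shows that every transition $\xi^{(k-1)} \to \xi^{(k)}$ is admissible. Since each intermediate vertex $x_1, \ldots, x_{n-1}$ receives and then releases a coin, the cumulative effect is $\xi^{(n)} = \tau_{xy} \xi = \xi'$, yielding $p_n(\xi, \xi') > 0$ with $n \leq N - 1 < 2N$.

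The delicate situation is when $\xi(x) \leq 0$ and the bank-$i$ debt sum equals $-R_i$, so that no coin can initially leave $x$. A case analysis of how the bank-$i$ debt sum changes from $\xi$ to $\xi'$ forces $y \in \V_i$ and $\xi(y) \leq -1$: otherwise, removing a coin from $x$ would strictly decrease the debt sum below $-R_i$, contradicting $\xi' \in S$. In particular $R_i \geq 1$ and thus $M \geq 1$, so there exists a vertex $w$ with $\xi(w) \geq 1$, necessarily distinct from both $x$ and $y$.

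In this residual case I would perform two successive pushes along shortest paths in $\G$: first push one coin from $w$ to $y$, then push one coin from $x$ to $w$. The first push takes at most $N - 1$ moves, is initiated by the valid transition from $w$ since $\xi(w) > 0$, and is admissible throughout by iterating Lemma~\ref{lem:move-edge}. The resulting state $\eta$ differs from $\xi$ only at $w$ (decreased by one) and $y$ (increased by one), and because $y \in \V_i$ with $\xi(y) \leq -1$, the bank-$i$ debt sum in $\eta$ equals $-R_i + 1$. This one unit of slack is exactly what Lemma~\ref{lem:transition} needs for the first move of the second push from $x$ to have positive probability; Lemma~\ref{lem:move-edge} then handles the remaining moves. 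The terminal state restores $w$ to $\xi(w)$, leaves $y$ at $\xi(y) + 1$, and puts $x$ at $\xi(x) - 1$, so it coincides with $\xi'$, and the two pushes together involve at most $2(N - 1) < 2N$ moves. The main obstacle is precisely this saturated-bank case: the structural observation that makes the two-push construction work is that $\xi' \in S$ automatically produces a debt-carrying vertex (namely $y$ itself) whose debt can be partially paid off to release the slack needed to restart the dynamics at $x$.
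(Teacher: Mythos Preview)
Your proof is correct and follows essentially the same strategy as the paper: push a coin along a path in the easy case, and in the saturated case (which you identify directly, whereas the paper reaches it via sub-cases~2a--2c) first push a coin $w \to y$ to free up one unit of bank-$i$ debt and then push $x \to w$. One small wording issue: the implication ``$R_i \geq 1$ and thus $M \geq 1$'' is not valid as stated, but you do not actually need it---since $\xi(x) + \xi(y) \leq -1$ and $\sum_z \xi(z) = M \geq 0$, the remaining vertices carry total value at least $M + 1 \geq 1$, which already yields the required $w$.
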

\begin{proof}
 Because the graph is connected, there exists a self-avoiding directed path
 $$ x = x_0 \to x_1 \to \cdots \to x_t = y \quad \hbox{for some} \quad t < N $$
 going from vertex~$x$ to vertex~$y$.
 According to Lemma~\ref{lem:transition}, we can move a coin~$x = x_0 \to x_1$ whenever vertex~$x$ or the bank vertex~$x$ goes to has at least one coin:
\begin{equation}
\label{eq:move-path-1}
  \xi (x) > 0 \qquad \hbox{or} \qquad \sum_{z \in \V_i} \,\xi (z) \,\ind \{\xi (z) < 0 \} > - R_i
\end{equation}
 in which case, after applying Lemma~\ref{lem:move-edge} repeatedly, we can move the coin along the path up to vertex~$y$.
 In particular, to prove the lemma, it suffices to prove that~\eqref{eq:move-path-1} holds.
 It turns out, however, that this is the case for most but not all configurations~$\xi \in S$ such that~$\tau_{xy} \,\xi \in S$.
 The trouble appears when vertex~$x$ has no coin, vertex~$y$ is in debt, both vertices go to the same bank, and their bank is empty, in which case we will need to bring a coin from another vertex~$w$.
 We now turn the argument into a rigorous proof.
 To begin with, assume that~$x \in \V_i$ and~$y \in \V_j$. \vspace*{5pt} \\
\noindent {\bf Case 1}. Vertex~$x$ has at least one coin: $\xi (x) > 0$. \vspace*{5pt} \\
 In this case, \eqref{eq:move-path-1} holds so the result follows from the reasoning above using Lemmas~\ref{lem:transition} and~\ref{lem:move-edge}. \vspace*{5pt} \\
\noindent {\bf Case 2}. Vertex~$x$ does not have any coin: $\xi (x) \leq 0$. \vspace*{5pt} \\
 This case is more complicated so we consider several sub-cases depending on whether~$y$ is in debt or not and on whether the two vertices go to the same bank or not. \vspace*{5pt} \\
\noindent {\bf Sub-case 2a}. Vertex~$y$ in not in debt: $\xi (y) \geq 0$. \vspace*{5pt} \\
 In this case, $\xi' (x) = \xi (x) - 1 < 0$ and~$\xi' (y) = \xi (y) + 1 > 0$ therefore
 $$ \xi' (x) \,\ind \{\xi' (x) < 0 \} = \xi (x) - 1 \quad \hbox{and} \quad \xi' (y) \,\ind \{\xi' (y) < 0 \} = 0. $$
 Using also that~$\xi' = \tau_{xy} \,\xi \in S$, we deduce that
\begin{equation}
\label{eq:move-path-2}
  \begin{array}{l}
  \displaystyle \sum_{z \in \V_i} \,\xi (z) \,\ind \{\xi (z) < 0 \} >
  \displaystyle \sum_{z \in \V_i \setminus \{x \}} \xi (z) \,\ind \{\xi (z) < 0 \} + \xi (x) - 1 \vspace*{5pt} \\ \hspace*{60pt} =
  \displaystyle \sum_{z \in \V_i \setminus \{x \}} \xi' (z) \,\ind \{\xi' (z) < 0 \} \geq - R_i. \end{array}
\end{equation}
 In particular, \eqref{eq:move-path-1} holds so we can move a coin~$x \to y$ in less than~$N$ time steps. \vspace*{5pt} \\
\noindent {\bf Sub-case 2b}. The vertices go to different banks: $i \neq j$. \vspace*{5pt} \\
 In this case, we again have~$\xi' (x) \,\ind \{\xi' (x) < 0 \} = \xi (x) - 1$.
 In addition, $y \notin \V_i$ therefore~\eqref{eq:move-path-2} and so~\eqref{eq:move-path-1} again hold, showing that we can move a coin~$x \to y$. \vspace*{5pt} \\
\noindent {\bf Sub-case 2c}. Everything else: $\xi (y) < 0$ and~$i = j$. \vspace*{5pt} \\
 In contrast with 2a and 2b, the common bank might be empty in this case and so~\eqref{eq:move-path-2} might not hold.
 To deal with this problem, the idea is to find another vertex~$w$ that has at least one coin, move this coin along a path~$w \to y$, which will bring one coin in the bank that we can then move along a path~$x \to y$.
 To begin with, observe that, because
 $$ \sum_{z \in \V} \,\xi (z) = M > 0, \qquad \xi (x) \leq 0 \qquad \hbox{and} \qquad \xi (y) < 0, $$
 there exists~$w \in \V$, $w \neq x, y$, such that~$\xi (w) > 0$.
 Using again that the graph is connected, there exist two self-avoiding directed paths
 $$ x = x_0 \to x_1 \to \cdots \to x_s = w \qquad \hbox{and} \qquad w = y_0 \to y_1 \to \cdots \to y_t = y $$
 for some~$s, t < N$, going from vertex~$x$ to vertex~$w$, and from vertex~$w$ to vertex~$y$.
 Using as previously Lemmas~\ref{lem:transition} and~\ref{lem:move-edge}, and the fact that~$\xi (w) > 0$, we can move a coin along the path~$w \to y$, after which the configuration is~$\xi'' = \tau_{wy} \,\xi$.
 In this configuration, we have
 $$ \xi'' (x) \,\ind \{\xi'' (x) < 0 \} = \xi (x) \,\ind \{\xi (x) < 0 \} \quad \hbox{and} \quad \xi'' (y) \,\ind \{\xi'' (y) < 0 \} = \xi (y) + 1 $$
 from which it follows that
 $$ \sum_{z \in \V_i} \,\xi'' (z) \,\ind \{\xi'' (z) < 0 \} = \sum_{z \in \V_i} \,\xi (z) \,\ind \{\xi (z) < 0 \} + 1 > - R_i. $$
 In particular, the second condition in~\eqref{eq:move-path-1} holds for configuration~$\xi''$ so we can move a coin along the directed path~$x \to w$, after which the configuration is
 $$ \tau_{xw} \,\xi'' = \tau_{xw} \,\tau_{wy} \,\xi = \tau_{xy} \,\xi = \xi'. $$
 In conclusion, $p_{s + t} (\xi, \xi') > 0$ for some~$s, t < N$, and the proof is complete.
\end{proof} \\ \\
 Using Lemma~\ref{lem:move-path}, we can now deduce that the process is irreducible.
\begin{lemma}[irreducibility] --
\label{lem:irreducibility}
 The process~$(\xi_t)$ is irreducible.
\end{lemma}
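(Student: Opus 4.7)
The plan is to fix an arbitrary vertex $v_0 \in \V$ and let $\xi^{\star} \in S$ be the canonical configuration with all $M$ coins at $v_0$ and no customer in debt, then show that every $\xi \in S$ can be driven to $\xi^{\star}$ and, conversely, $\xi^{\star}$ can be driven to every $\xi' \in S$, each with positive probability in a finite number of time steps. Chaining these two transitions yields $p_{s+t}(\xi, \xi') > 0$ for suitable $s, t$, and irreducibility follows. The workhorse throughout is Lemma~\ref{lem:move-path}: each single-coin displacement $\tau_{xy}$ that keeps the configuration in $S$ is realized with positive probability in fewer than $2N$ steps.

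For the forward direction $\xi \to \xi^{\star}$, I would proceed in two phases. In Phase F1, as long as some $y$ is in debt, the identity $\sum_z \xi (z) = M \geq 0$ forces the existence of an $x$ with $\xi (x) > 0$; applying $\tau_{xy}$ strictly reduces the total debt while preserving membership in $S$, since $\xi'(x) = \xi (x) - 1 \geq 0$, the entry $\xi' (y) = \xi (y) + 1$ only relaxes the individual and collective debt bounds at the bank containing $y$, and the total sum is unchanged. After finitely many such moves the configuration becomes nonnegative. In Phase F2, one then moves the coins one at a time from each $x \neq v_0$ with $\xi (x) > 0$ to $v_0$; here every intermediate configuration is trivially in $S$ because no debt is ever created.

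For the reverse direction $\xi^{\star} \to \xi'$, I would again use two phases. In Phase R1, I create the target debts: for each bank $i$ in turn, and for each $y \in \V_i$ with $\xi' (y) < 0$, move $|\xi' (y)|$ coins from $y$ to $v_0$ one at a time. The admissibility of the borrowing step relies on the inequality $D_i := \sum_{y \in \V_i,\,\xi' (y) < 0} |\xi' (y)| \leq R_i$, which holds because $\xi' \in S$; consequently the current debt at bank $i$ never exceeds $D_i \leq R_i$, so bank $i$ always has at least one coin to lend. After Phase R1, the configuration matches $\xi'$ on all in-debt vertices, is zero on all other non-$v_0$ vertices, and puts $M + D$ coins at $v_0$, where $D = D_1 + \cdots + D_K$. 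In Phase R2, I distribute these $M + D$ coins from $v_0$ by performing $\xi' (y)$ single-coin moves $v_0 \to y$ for each $y$ with $\xi' (y) > 0$; since $v_0$ keeps a nonnegative surplus throughout and no debt is created, each step is in $S$.

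The main obstacle is the bookkeeping in Phase R1, where one must argue that the collective bank constraint $\sum_{z \in \V_i} \xi (z) \ind \{\xi (z) < 0\} \geq -R_i$ is preserved at every intermediate step, not only at the end. The clean way to handle this is to process one bank at a time and to order the moves so that the instantaneous debt at bank $i$ equals the partial sum of the $|\xi' (y)|$ already created, which stays below $D_i \leq R_i$. Once this verification is complete, Lemma~\ref{lem:move-path} applied to each single-coin move, together with the chaining $\xi \to \xi^{\star} \to \xi'$, delivers the required positive-probability transition for every pair $\xi, \xi' \in S$.
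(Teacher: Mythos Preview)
Your argument is essentially correct but follows a different route from the paper. The paper argues directly: it introduces the distance $D(\xi,\xi')=\sum_{z\in\V}|\xi(z)-\xi'(z)|$, notes that conservation of~$M$ guarantees vertices $x,y$ with $\xi(x)>\xi'(x)$ and $\xi(y)<\xi'(y)$ whenever $\xi\neq\xi'$, observes that $\tau_{xy}$ drops the distance by~$2$, and iterates with Lemma~\ref{lem:move-path}. Your detour through a canonical configuration~$\xi^{\star}$ is longer but has the virtue that the membership $\tau_{xy}\xi\in S$ of every intermediate configuration is made fully explicit---something the paper's proof treats rather briskly (indeed, for an arbitrary choice of such $x,y$ the move $\tau_{xy}\xi$ need not lie in~$S$, so the paper is implicitly relying on the existence of a good pair).

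One small gap to patch: Phases~R1 and~R2 as written tacitly assume $\xi'(v_0)\geq 0$. If $\xi'(v_0)<0$, Phase~R1 cannot manufacture debt at~$v_0$ by moving coins from~$v_0$ to itself, so after~R1 the pile at~$v_0$ is $M+D-|\xi'(v_0)|$ rather than $M+D$, and during~R2 the vertex~$v_0$ must eventually go negative, contradicting your assertion that ``$v_0$ keeps a nonnegative surplus throughout.'' The fix is painless: since $\sum_z\xi'(z)=M>0$, some vertex has $\xi'(\cdot)>0$; either choose $v_0$ for the reverse direction to be such a vertex (the forward and reverse canonical configurations may differ, and any two $\xi^{\star}$'s communicate by your Phase~F2 argument), or simply note that letting~$v_0$ slide into debt during~R2 is harmless because the total debt at $v_0$'s bank rises monotonically to its target value $D_j\leq R_j$.
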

\begin{proof}
 The idea is simple:
 given two configurations~$\xi, \xi' \in S$, we can go from one configuration to the other one by moving coins around, which can be done in a finite number of time steps and with positive probability according to Lemma~\ref{lem:move-path}.
 To make this statement rigorous, define a distance~$D$ on the set of configurations by letting
 $$ D (\xi, \xi') = \sum_{z \in \V} \,|\xi (z) - \xi' (z)| \quad \hbox{for all} \quad \xi, \xi' \in S. $$
 As long as~$\xi \neq \xi'$, because the number of coins is preserved by the dynamics and so the coordinates of each of the two configurations add up to~$M$,
 $$ \xi (x) > \xi' (x) \quad \hbox{and} \quad \xi (y) < \xi' (y) \quad \hbox{for some} \quad x, y \in \V. $$
 Then, moving one coin~$x \to y$ when in configuration~$\xi$ reduces the distance to~$\xi'$. Indeed,
 $$ \begin{array}{l}
    \displaystyle D (\tau_{xy} \,\xi, \xi') =
    \displaystyle \sum_{z \in \V} \,|\tau_{xy} \,\xi (z) - \xi' (z)| \vspace*{4pt} \\ \hspace*{20pt} =
    \displaystyle \sum_{z \in \V \setminus \{x, y \}} |\xi (z) - \xi' (z)| + |\xi (x) - 1 - \xi' (x)| + |\xi (y) + 1 - \xi' (y)| \vspace*{4pt} \\ \hspace*{80pt} =
    \displaystyle \sum_{z \in \V} \,|\xi (z) - \xi' (z)| - 2 = D (\xi, \xi') - 2. \end{array} $$
 This shows that, starting from configuration~$\xi$, we can move the coins around in such a way that, after a finite number of time steps and with positive probability, the new configuration is distance zero from~$\xi'$ and so equal to~$\xi'$.
 In particular, the process is irreducible.
\end{proof} \\ \\
 Aperiodicity directly follows from Lemma~\ref{lem:transition}.
\begin{lemma}[aperiodicity] --
\label{lem:aperiodicity}
 The process~$(\xi_t)$ is aperiodic.
\end{lemma}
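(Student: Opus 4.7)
The plan is to exhibit a single configuration $\xi \in S$ with $p(\xi,\xi) > 0$; combined with the irreducibility established in Lemma~\ref{lem:irreducibility}, this forces period one and hence aperiodicity. Assume $N \geq 2$, since the case $N = 1$ has no edges and the chain is trivially aperiodic. The mechanism giving a self-loop is the canceled transaction: by Lemma~\ref{lem:transition}, the indicator
\[
  \ind\bigg\{\xi(x) > 0 \ \text{or} \ \sum_{z \in \V_i} \xi(z)\,\ind\{\xi(z)<0\} > - R_i \bigg\}
\]
vanishes exactly when vertex $x \in \V_i$ has no coins \emph{and} bank $i$ is fully depleted; in that situation, selecting any oriented edge $\vec{xy}$ leaves the state unchanged.

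So the key step is to construct some $\xi \in S$ that simultaneously satisfies $\xi(x) \leq 0$ and $\sum_{z \in \V_i} \xi(z)\,\ind\{\xi(z)<0\} = -R_i$ for some $x \in \V_i$ that has at least one neighbor $y$ (such a neighbor exists since $\G$ is connected and has at least one edge). Pick any bank index $i$, fix $x \in \V_i$, and define $\xi$ by setting $\xi(x) = -R_i$, setting $\xi(z) = 0$ for every other customer of bank $i$, and distributing the remaining $M + R_i$ coins arbitrarily and nonnegatively among the vertices outside $\V_i$ (or, if $K=1$, among $\V_i \setminus \{x\}$, which has at least one element since $N \geq 2$). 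Then $\xi \in S$: the coordinates sum to $M$, the constraint $\xi(z) \geq -R_i$ on $\V_i$ holds because only $x$ is in debt, bank $i$ has debt exactly $-R_i$, and the other banks carry no debt at all. Since $\xi(x) = -R_i \leq 0$ and bank $i$ is saturated, the indicator above vanishes at $x$, so the move along any oriented edge $\vec{xy}$ is canceled.

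Applying Lemma~\ref{lem:transition}, the probability of this oriented edge being selected is $1/(2\,\card(\E))$, and when it is selected the configuration stays at $\xi$. Therefore
\[
  p(\xi,\xi) \geq \frac{1}{2\,\card(\E)} > 0,
\]
which shows that state $\xi$ has period one. Since the chain is irreducible by Lemma~\ref{lem:irreducibility}, all states share the same period, so $(\xi_t)$ is aperiodic.

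There is no real obstacle here; the entire argument is a short existence construction on top of Lemma~\ref{lem:transition}. The only mildly delicate point is making sure the constructed $\xi$ lies in $S$ in all edge cases (e.g., $R_i = 0$, in which the bank being ``empty'' just means no debt at all; or $K = 1$, which forces the spare coins to be placed within $\V_i \setminus \{x\}$), but these reduce immediately to the same construction.
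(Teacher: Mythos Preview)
Your proof is correct and follows essentially the same approach as the paper: exhibit a configuration where some vertex~$x$ has no coins and its bank is saturated, so that selecting any oriented edge out of~$x$ cancels the transaction and yields $p(\xi,\xi)\ge 1/(2\card(\E))>0$, then invoke irreducibility to conclude aperiodicity. The paper's version is terser (it simply posits ``any configuration'' satisfying the two conditions and bounds by $\deg(x)/(2\card(\E))$), whereas you give an explicit construction and handle the edge cases $N=1$, $R_i=0$, $K=1$ more carefully, but the underlying argument is the same.
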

\begin{proof}
 Because the process is irreducible, it suffices to find one configuration with period one.
 To do this, fix~$x \in \V$ and let~$\xi$ be any configuration such that
 $$ \xi (x) \leq 0 \qquad \hbox{and} \qquad \sum_{z \in \V_i} \,\xi (z) \,\ind \{\xi (z) < 0 \} = - R_i. $$
 Then, according to Lemma~\ref{lem:transition},
 $$ p (\xi, \tau_{xy} \,\xi) = 0 \quad \hbox{for all} \quad (x, y) \in \E $$
 from which it follows that whenever the directed edge chosen uniformly at random starts from vertex~$x$, the process stays in configuration~$\xi$.
 This implies that
 $$ p (\xi, \xi) \geq \frac{\deg (x)}{2 \card (\E)} \geq \frac{1}{2 \card (\E)} > 0 $$
 which proves that~$\xi$ has period one.
\end{proof} \\ \\
 Irreducibility and aperiodicity, together with the fact that the state space~$S$ is finite, imply that the process has a unique stationary distribution~$\pi$ to which the process converges regardless of the initial configuration.
 Using reversibility~(or alternatively, double stochasticity), we now prove that~$\pi$ is the uniform distribution on the set of configurations.
\begin{lemma}[reversibility] --
\label{lem:uniform}
 The stationary distribution~$\pi = \uniform (S)$.
\end{lemma}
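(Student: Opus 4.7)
The plan is to verify that the uniform distribution on~$S$ is reversible with respect to the transition kernel~$p$ — that is, that it satisfies the detailed balance equation $\pi (\xi) \,p (\xi, \xi') = \pi (\xi') \,p (\xi', \xi)$ for all~$\xi, \xi' \in S$ — and then to invoke uniqueness of the stationary distribution, which follows from Lemmas~\ref{lem:irreducibility} and~\ref{lem:aperiodicity} together with the finiteness of~$S$, to conclude that~$\pi = \uniform (S)$. Since uniform weights are constant in~$\xi$, detailed balance collapses to the symmetry identity $p (\xi, \xi') = p (\xi', \xi)$ for all $\xi, \xi' \in S$, so the entire content of the lemma reduces to checking that~$p$ is symmetric.

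By Lemma~\ref{lem:transition}, any nonzero one-step transition has the common value~$1 / (2 \,\card (\E))$ and moves exactly one coin along some directed edge, so pairs~$(\xi, \xi')$ that are not related by a single move~$\tau_{xy}$ contribute zero on both sides and cause no trouble. The only point to verify is that whenever~$\xi' = \tau_{xy} \,\xi$ with~$p (\xi, \xi') > 0$, the reverse transition satisfies~$p (\xi', \xi) > 0$ as well. This is exactly what Lemma~\ref{lem:move-edge} is designed to give: applying it with the choice $z = x$ — admissible because $(x, y) \in \E$ implies~$(y, x) \in \E$ in the undirected graph — yields $\tau_{yx} \,\xi' = \xi \in S$ together with $p (\xi', \tau_{yx} \,\xi') > 0$, as desired. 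The only conceptual obstacle is therefore the reversibility of the move rule itself, which has already been isolated in Lemma~\ref{lem:move-edge} and rests on the observation that executing a legal move can only relax, never tighten, the bank-capacity constraint relevant for the reverse move. Granting that, the detailed-balance check is essentially a one-line deduction, and one could equivalently phrase the argument in terms of double stochasticity — the symmetry of~$p$ forces column sums to equal row sums to equal one — but the reversibility viewpoint is convenient since it delivers a useful structural property along the way.
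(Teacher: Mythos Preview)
Your proposal is correct and follows essentially the same route as the paper: reduce detailed balance for the uniform measure to the symmetry $p(\xi,\xi') = p(\xi',\xi)$, dispose of the trivial cases via Lemma~\ref{lem:transition}, handle the nontrivial case $\xi' = \tau_{xy}\,\xi$ by invoking Lemma~\ref{lem:move-edge} with $z = x$, and conclude by uniqueness of the stationary distribution. One tiny imprecision: when $\xi = \xi'$ the transition probability $p(\xi,\xi)$ need not be zero (cf.\ the proof of Lemma~\ref{lem:aperiodicity}), but of course the symmetry is trivial there, so this does not affect the argument.
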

\begin{proof}
 To prove reversibility and double stochasticity, it suffices to prove that
\begin{equation}
\label{eq:uniform-1}
  p (\xi, \xi') = p (\xi', \xi) \quad \hbox{for all} \quad \xi, \xi' \in S.
\end{equation}
 The result is obvious when~$\xi' = \xi$.
 When~$\xi' \neq \tau_{xy} \,\xi$ for all~$(x, y) \in \E$, it follows from Lemma~\ref{lem:transition} that the two transition probabilities in~\eqref{eq:uniform-1} are equal to zero so the result is again true in this case.
 To deal with the remaining cases, assume that
 $$ \xi' = \tau_{xy} \,\xi \quad \hbox{for some} \quad (x, y) \in \E. $$
 Then, applying Lemma~\ref{lem:move-edge} with~$z = x$, we get
\begin{equation}
\label{eq:uniform-2}
  p (\xi, \xi') = p (\xi, \tau_{xy} \,\xi) > 0 \quad \hbox{implies that} \quad p (\xi', \xi) = p (\tau_{xy} \,\xi, \tau_{yx} \,\tau_{xy} \,\xi) > 0.
\end{equation}
 Since in addition the transition probabilities are equal to either zero or~$1 / 2 \card (E)$ according to Lemma~\ref{lem:transition}, the two probabilities in~\eqref{eq:uniform-2} must be equal, which proves~\eqref{eq:uniform-1}.
 In particular, $\uniform (S)$ is a stationary distribution so the lemma follows from the uniqueness of~$\pi$.
\end{proof}


\section{Number of configurations of coins}
\label{sec:combinatorics}
 Because the stationary distribution is~$\uniform (S)$, meaning that all the configuration are equally likely, the distribution of money at equilibrium, i.e., the probability that a given vertex has~$c$ coins under the distribution~$\pi$, is equal to the number of configurations with exactly~$c$ coins at that vertex divided by the total number of configurations.
 This section is devoted to counting those configurations, from which Theorem~\ref{th:combinatorics} follows.
 To begin with, we compute the number of configurations in the absence of banks, i.e., the special case~$R = 0$.
\begin{lemma} --
\label{lem:combination}
 Assume that~$R = 0$. Then the number of configurations is
 $$ \Lambda ((N_1, \ldots, N_K), (R_1, \ldots, R_K), M) = \Lambda (N_i, 0, M) = {M + N - 1 \choose N - 1}. $$
\end{lemma}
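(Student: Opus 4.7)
The plan is to reduce the statement to the classical stars-and-bars counting identity, and then separately to verify that the general expression for $\Lambda$ defined in Theorem~\ref{th:combinatorics} collapses to this value when we substitute $R_i = 0$ for all $i$.

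First I would unpack the set $S$ of admissible configurations under the assumption $R = 0$. Since $R_i = 0$ and we need $\sum_{z \in \V_i} \xi(z) \,\ind\{\xi(z) < 0\} \geq -R_i = 0$, no vertex can hold a negative number of coins; thus $S$ reduces to
$$ S = \Big\{\xi \in \N^{\V} : \sum_{z \in \V} \xi(z) = M \Big\}. $$
Counting $|S|$ is the classical problem of distributing $M$ indistinguishable coins among $N$ distinguishable vertices, which by a standard stars-and-bars argument equals $\binom{M + N - 1}{N - 1}$. This gives the right-hand side of the claimed identity.

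Next I would verify that the general formula for $\Lambda$ specializes correctly. Setting $R_1 = \cdots = R_K = 0$, each outer sum $\sum_{a_i = 0}^{R_i}$ collapses to the single term $a_i = 0$, which forces the factor $\binom{a_i - 1}{b_i - 1} = \binom{-1}{b_i - 1}$ to vanish unless $b_i = 0$ (using the convention $\binom{-1}{-1} = 1$, viewing the binomial as the product formula which yields an empty product). Hence the multiple sum collapses to the single term with $a_i = b_i = 0$ for all $i$, leaving
$$ \Lambda(N_i, 0, M) = \prod_{i=1}^K \binom{N_i}{0} \cdot \binom{M + N - 1}{N - 1} = \binom{M+N-1}{N-1}, $$
as desired.

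The only potentially delicate point is the convention regarding $\binom{-1}{-1}$ and the vanishing of $\binom{-1}{b_i - 1}$ for $b_i \geq 1$; these are standard when the binomial is understood via the falling-factorial definition, so no real obstacle arises. The proof is essentially immediate once one notices that removing the banks recovers the pure stars-and-bars enumeration.
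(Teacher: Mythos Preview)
Your proposal is correct, and its core argument---reducing to nonnegative integer solutions of $\sum_{x\in\V}\xi(x)=M$ and invoking stars-and-bars---is exactly the paper's proof. The additional verification that the multi-sum formula for $\Lambda$ collapses to $\binom{M+N-1}{N-1}$ when all $R_i=0$ is not present in the paper: there, $\Lambda$ is \emph{defined} as the number of configurations, and the closed-form formula is derived afterward (in Lemma~\ref{lem:combinatorics}) \emph{using} this lemma, so your second step would be logically downstream rather than part of the proof. It is a harmless consistency check, but be aware that in the paper's ordering it is not needed here.
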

\begin{proof}
 By definition, the number of configurations~$\Lambda (N_i, R_i, M)$ is the number of integer solutions~$\xi (x)$, $x \in \V$, to the equation
 $$ \sum_{x \in \V} \,\xi (x) = M \quad \hbox{such that} \quad \sum_{y \in \V_i} \,\xi (y) \,\ind \{\xi (y) < 0 \} \geq - R_i \quad \hbox{for all} \quad i. $$
 Because~$R = 0$, and so~$R_i = 0$ for all~$i$, this is the number of integer solutions to
\begin{equation}
\label{eq:combination}
  \sum_{x \in \V} \,\xi (x) = M \quad \hbox{such that} \quad \xi (x) \geq 0 \quad \hbox{for all} \quad x \in \V,
\end{equation}
 which is also the number of sets of~$M$ items, each of which can be of~$N$ different types (combination with repetition).
 This quantity is known to be~($M + N - 1$ choose~$N - 1$).
\end{proof} \\ \\
 Using Lemma~\ref{lem:combination}, we now compute the number of configurations in the presence of banks.
\begin{lemma} --
\label{lem:combinatorics}
 The total number of configurations is
 $$ \begin{array}{rcl}
    \Lambda (N_i, R_i, M) & \n = \n &
    \displaystyle \sum_{a_1 = 0}^{R_1} \ \cdots \ \sum_{a_K = 0}^{R_K} \
    \displaystyle \sum_{b_1 = 0}^{N_1} \ \cdots \ \sum_{b_K = 0}^{N_K}
    \displaystyle {N_1 \choose b_1} \cdots {N_K \choose b_K} \vspace*{8pt} \\ && \hspace*{25pt}
    \displaystyle {a_1 - 1 \choose b_1 - 1} \cdots {a_K - 1 \choose b_K - 1}
    \displaystyle {M + a_1 + \cdots + a_K + N - b_1 - \cdots - b_K - 1 \choose N - b_1 - \cdots - b_K - 1}. \end{array} $$
\end{lemma}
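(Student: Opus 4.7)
The plan is to partition $S$ according to the debt profile at each bank and count each stratum by elementary combinatorics, reducing the non-debt side to Lemma~\ref{lem:combination}. For a configuration $\xi \in S$ and each bank $i$, introduce
$$ a_i \;=\; -\sum_{z \in \V_i} \xi(z)\,\ind\{\xi(z) < 0\} \qquad \hbox{and} \qquad b_i \;=\; \card\{z \in \V_i : \xi(z) < 0\}, $$
so that $a_i$ is the total debt at bank $i$ and $b_i$ is the number of customers of bank $i$ who are in debt. The definition of $S$ forces $0 \leq a_i \leq R_i$ and $0 \leq b_i \leq N_i$, which will be the $2K$ outermost summations in the final formula.

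The next step is to count, for fixed tuples $(a_i)$ and $(b_i)$, the number of configurations with that exact debt profile. I would handle this bank by bank. For bank $i$, first select which $b_i$ of the $N_i$ customers carry debt: $\binom{N_i}{b_i}$ ways. Then assign the actual debt amounts, which are strictly positive integers $-\xi(z) \geq 1$ summing to $a_i$ across the chosen $b_i$ debtors; by the standard stars-and-bars identity for compositions into positive parts, this contributes $\binom{a_i - 1}{b_i - 1}$ ways. Multiplying these contributions across all banks gives the product of binomials $\binom{N_i}{b_i}\binom{a_i - 1}{b_i - 1}$ that appears in the formula.

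Once all debtors are fixed, the configuration on the remaining $N - b_1 - \cdots - b_K$ non-debtor vertices must assign non-negative integer values whose sum is $M + a_1 + \cdots + a_K$, since the grand total is $M$ and the debtors already account for $-(a_1 + \cdots + a_K)$. This is exactly the bank-free counting problem solved by Lemma~\ref{lem:combination}, so the number of completions is
$$ \binom{M + a_1 + \cdots + a_K + N - b_1 - \cdots - b_K - 1}{N - b_1 - \cdots - b_K - 1}. $$
Summing the product of all these factors over $(a_i)$ and $(b_i)$ yields the claimed expression for $\Lambda(N_i, R_i, M)$.

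The only delicate point is the boundary case $b_i = 0$: here $\binom{a_i - 1}{b_i - 1} = \binom{a_i - 1}{-1}$, which must be interpreted as $1$ when $a_i = 0$ and $0$ otherwise, consistent with the obvious fact that having no debtors at bank $i$ forces zero debt there. This is the main obstacle to verify, but it is purely a matter of fixing the convention for the empty composition; with that fixed, the decomposition above is an exact bijection between $S$ and the indexed disjoint union of (debtor-choice) $\times$ (debt-composition) $\times$ (non-debtor-distribution) tuples, and the formula follows.
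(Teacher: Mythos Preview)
Your proposal is correct and follows essentially the same approach as the paper: stratify $S$ by the debt profile $(a_i,b_i)$, count debtor choices by $\binom{N_i}{b_i}$, debt allocations by compositions into positive parts $\binom{a_i-1}{b_i-1}$, and non-debtor allocations via Lemma~\ref{lem:combination}. The paper first splits $S$ into $S^+$ (no debtors) and $S^-$ and only merges them into a single sum at the end using the same boundary convention you identified, whereas you go directly to the unified sum; this is a cosmetic difference only.
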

\begin{proof}
 Letting~$S^+ \subset S$ be the subset of configurations where no one is in debt, i.e.,
 $$ S^+ = \{\xi \in S : \xi(x) \geq 0 \ \hbox{for all} \ x \in \V \}, $$
 condition~\eqref{eq:combination} is satisfies and it follows from~Lemma~\ref{lem:combination} that
\begin{equation}
\label{eq:cardS+}
  \card (S^+) = \binom{M + N - 1}{N - 1}.
\end{equation}
 Now, let~$S^- = S \setminus S^+$.
 To compute~$\card (S^-)$, we denote by
 $$ \phi (a_i, b_i, M) = \phi ((a_1, \ldots, a_K), (b_1, \ldots, b_K), M) $$
 the number of configurations with~$a_i$ coins borrowed from bank~$i$ and~$b_i$ individuals from bank~$i$ in debt.
 Then, summing over all the possible choices, we get
\begin{equation}
\label{eq:cardS-}
  \card (S^-) = \sum_{a_1 = 1}^{R_1} \cdots \sum_{a_K = 1}^{R_K} \
                \sum_{b_1 = 1}^{a_1 \land (N_1 - 1)} \cdots \sum_{b_K = 1}^{a_K \land (N_K - 1)} \phi ((a_1, \ldots, a_K), (b_1, \ldots, b_K), M).
\end{equation}
 In addition, for each bank~$i$,
\begin{equation}
\label{eq:combinatorics-1}
  \hbox{\# ways to choose the individuals in debt = ($N_i$ choose $b_i$)}.
\end{equation}
 Letting~$x_1, x_2, \ldots, x_{b_i}$ be these individuals, $A_k = - \xi (x_k)$ and~$B_k = - \xi (x_k) - 1$, the number of ways to allocate the debt among these individuals is
\begin{equation}
\label{eq:combinatorics-2}
  \begin{array}{l}
  \hbox{\# integer solutions to~$A_1 + \cdots + A_{b_i} = a_i$ with~$A_k > 0$} \vspace*{4pt} \\ \hspace*{20pt} = \
  \hbox{\# integer solutions to~$B_1 + \cdots + B_{b_i} = a_i - b_i$ with~$B_k \geq 0$} \vspace*{8pt} \\ \hspace*{50pt} = \
  \displaystyle \binom{a_i - b_i + b_i - 1}{b_i - 1} = \binom{a_i - 1}{b_i - 1} \end{array}
\end{equation}
 as in the proof of Lemma~\ref{lem:combination}.
 After choosing the total debt and the number of individuals in debt and following the same reasoning, we get that the number of ways to distribute the
 remaining~$M + a_1 + \cdots + a_K$ coins among the other~$N_1 + \cdots + N_K - b_1 - \cdots - b_K$ individuals is
\begin{equation}
\label{eq:combinatorics-3}
  \binom{(M + a_1 + \cdots + a_K) + (N_1 + \cdots + N_K - b_1 - \cdots - b_K) - 1}{(N_1 + \cdots + N_K - b_1 - \cdots - b_K) - 1}.
\end{equation}
 Combining~\eqref{eq:combinatorics-1}--\eqref{eq:combinatorics-3}, we deduce that
\begin{equation}
\label{eq:phi}
  \begin{array}{rcl}
  \phi (a_i, b_i, M) & \n = \n &
  \displaystyle {N_1 \choose b_1} \cdots {N_K \choose b_K} {a_1 - 1 \choose b_1 - 1} \cdots {a_K - 1 \choose b_K - 1} \vspace*{12pt} \\ && \hspace*{20pt} \times \,
  \displaystyle {M + a_1 + \cdots + a_K + N - b_1 - \cdots - b_K - 1 \choose N - b_1 - \cdots - b_K - 1}. \end{array}
\end{equation}
 Using~$\Lambda (N_i, R_i, M) = \card (S^+) + \card (S^-)$, \eqref{eq:cardS+}, \eqref{eq:cardS-}, \eqref{eq:phi}, and the convention
 $$ \binom{-1}{k} = 1 \quad \hbox{and} \quad \binom{n}{k} = 0 \quad \hbox{when} \quad k < 0 \ \hbox{or} \ k > n, $$
 we conclude that~$\Lambda (N_i, R_i, M)$ is equal to
 $$ \begin{array}{l}
    \displaystyle \binom{M + N - 1}{N - 1} + \sum_{a_1 = 1}^{R_1} \cdots \sum_{a_K = 1}^{R_K} \
    \displaystyle \sum_{b_1 = 1}^{a_1 \land (N_1 - 1)} \cdots \sum_{b_K = 1}^{a_k \land (N_k - 1)} \phi (a_i, b_i, M) \vspace*{4pt} \\ \hspace*{20pt} =
    \displaystyle \binom{M + N - 1}{N - 1} + \sum_{a_1 = 1}^{R_1} \cdots \sum_{a_K = 1}^{R_K} \ \sum_{b_1 = 1}^{a_1 \land (N_1 - 1)} \cdots \sum_{b_K = 1}^{a_k \land (N_k - 1)}
    \displaystyle {N_1 \choose b_1} \cdots {N_K \choose b_K} \vspace*{4pt} \\ \hspace*{60pt}
    \displaystyle {a_1 - 1 \choose b_1 - 1} \cdots {a_K - 1 \choose b_K - 1} {M + a_1 + \cdots + a_K + N - b_1 - \cdots - b_K - 1 \choose N - b_1 - \cdots - b_K - 1} \vspace*{8pt} \\ \hspace*{20pt} =
    \displaystyle \binom{M + N - 1}{N - 1} + \sum_{a_1 = 1}^{R_1} \cdots \sum_{a_K = 1}^{R_K} \ \sum_{b_1 = 1}^{N_1} \cdots \sum_{b_K = 1}^{N_k}
    \displaystyle {N_1 \choose b_1} \cdots {N_K \choose b_K} \vspace*{4pt} \\ \hspace*{60pt}
    \displaystyle {a_1 - 1 \choose b_1 - 1} \cdots {a_K - 1 \choose b_K - 1} {M + a_1 + \cdots + a_K + N - b_1 - \cdots - b_K - 1 \choose N - b_1 - \cdots - b_K - 1} \vspace*{8pt} \\ \hspace*{20pt} =
    \displaystyle \sum_{a_1 = 0}^{R_1} \cdots \sum_{a_K = 0}^{R_K} \ \sum_{b_1 = 0}^{N_1} \cdots \sum_{b_K = 0}^{N_k}
    \displaystyle {N_1 \choose b_1} \cdots {N_K \choose b_K} \vspace*{8pt} \\ \hspace*{60pt}
    \displaystyle {a_1 - 1 \choose b_1 - 1} \cdots {a_K - 1 \choose b_K - 1} {M + a_1 + \cdots + a_K + N - b_1 - \cdots - b_K - 1 \choose N - b_1 - \cdots - b_K - 1}. \end{array} $$
 This completes the proof.
\end{proof} \\ \\
 To deduce Theorem~\ref{th:combinatorics}, assume that~$x \in \V_j$ and observe that, given that there are~$c$ coins at vertex~$x$, there is a total of~$M - c$ coins scattered across the rest of the connected graph.
 In case~$x$ is not in debt, the customers of bank~$j$ other than~$x$ can altogether borrow~$R_j$ coins therefore the number of configurations such that~$\xi (x) = c \geq 0$ is given by
\begin{equation}
\label{eq:number-1}
\begin{array}{l}
  \Lambda ((N_1, \ldots, N_{j - 1}, N_j - 1, N_{j+ 1}, \ldots,  N_K), (R_1, \ldots, R_K), M - c) \vspace*{4pt} \\ \hspace*{120pt} =
  \Lambda ((N_1, \ldots, N_K) - e_j, (R_1, \ldots, R_K), M - c). \end{array}
\end{equation}
 In case~$x$ is in debt, the customers of bank~$j$ other than~$x$ can altogether borrow~$R_j + c$ coins therefore the number of configurations such that~$\xi (x) = c < 0$ is given by
\begin{equation}
\label{eq:number-2}
\begin{array}{l}
  \Lambda ((N_1, \ldots, N_K) - e_j, (R_1, \ldots, R_{j - 1}, R_j + c, R_{j + 1}, \ldots, R_K), M - c) \vspace*{4pt} \\ \hspace*{100pt} =
  \Lambda ((N_1, \ldots, N_K) - e_j, (R_1, \ldots, R_K) + c e_j, M - c). \end{array}
\end{equation}
 The theorem follows from combining~\eqref{eq:number-1}--\eqref{eq:number-2} and Lemma~\ref{lem:combination}, and using that the stationary distribution~$\pi$ is uniform on the set of configurations.


\section{Convergence to the Laplace distribution}
\label{sec:laplace}
 This section is devoted to the proof of Theorem~\ref{th:laplace} which focuses on the distribution of money in the large population/temperature limit in the symmetric case where
\begin{equation}
\label{eq:symmetry}
  N_i = \card (\V_i) = N / K \quad \hbox{and} \quad R_i = R / K \quad \hbox{for all} \quad i = 1, 2, \ldots, K.
\end{equation}
 The first ingredient to prove the theorem is to observe that, according to Lemma~\ref{lem:uniform}, the stationary distribution of the process is the uniform distribution on the set of configurations \emph{regardless of the topology of the graph}.
 In particular, the limiting distribution of money on any connected graph is the same as the limiting distribution of money on the complete graph, in which case the evolution is described, in the large population limit, by a system of coupled differential equations for the fractions of individuals with a given number of coins~(see~Lemma~\ref{lem:mean-field}).
 Even though this system does not properly describes the process on arbitrary connected graphs, its limits as time goes to infinity corresponds to the limiting distribution of money of any large connected graph.
 The unique fixed point of the system of differential equations shows that, in the large population limit, the distribution of money converges to an asymmetric Laplace distribution~(see~Lemma~\ref{lem:fixed-point}), which is characterized by three parameters.
 The second ingredient is to use that the total number of coins across the population is preserved by the dynamics and that the number of coins in each bank is negligible compared to~$R_i$~(see Lemma~\ref{lem:bank-down}), to identify these three parameters. \\
\indent From now on, we assume that~$\G$ is the complete graph on~$N$ vertices.
 Because in this case the location of the individuals is unimportant, the process that only keeps track of the number of individuals with a given number of coins along with the bank they go to,
 $$ X_t = (X_t (i, c) : 1 \leq i \leq K, \ - R \leq c \leq M + R) \quad \hbox{where} \quad X_t (i, c) = \sum_{x \in \V_i} \,\ind \{\xi_t (x) = c \}, $$
 is a well-defined Markov chain.
 To describe the dynamics, let
 $$ X_t^- (i) = \sum_{c = -R/K}^{-1} X_t (i, c), \quad X_t^+ (i) = \sum_{c = 1}^{M + R} X_t (i, c), \quad B_t (i) = R/K + \sum_{c = -R/K}^{-1} c X_t (i, c) $$
 be the number of customers of bank~$i$ in debt, the number of customers of bank~$i$ with at least one coin, and the number of coins in bank~$i$ at time~$t$.
 Rescaling by~$N$, we also define
 $$ u_{i, c} (t) = \frac{X_t (i, c)}{N}, \quad u_i^{\pm} (t) = \frac{X_t^{\pm} (i)}{N} \quad \hbox{and} \quad p_i = E (\ind \{B_t (i) > 0 \}) = P (B_t (i) > 0) $$
 be respectively the fraction of individuals who are customers of bank~$i$ and have~$c$ coins, the fraction of individuals who are customers of bank~$i$ and have at least one coin/are in debt, and the probability that there is at least one coin in bank~$i$ at time~$t$.
 We also let
 $$ X_t^{\pm} = \sum_{i = 1}^K \,X_t^{\pm} (i) \quad \hbox{and} \quad u^{\pm} (t) = \sum_{i = 1}^K \,u_i^{\pm} (t) = \frac{X_t^{\pm}}{N} $$
 be the number and the fraction of individuals with at least one coin/in debt.
 Then, the functions~$u_{i, c}$ satisfy the following system of coupled differential equations.
\begin{lemma} --
\label{lem:mean-field}
 In the limit as~$N \to \infty$,
\begin{equation}
\label{eq:mean-field-0}
  u_{i, c}' = \left\{\begin{array}{lcl}
              \bar u \,(u_{i, c - 1} - u_{i, c}) - (u_{i, c} - u_{i, c + 1}) & \hbox{for all} & c > 0 \vspace*{4pt} \\
              \bar u \,(u_{i, c - 1} - u_{i, c}) - p_i (u_{i, c} - u_{i, c + 1}) & \hbox{for all} & c < 0 \end{array} \right.
\end{equation}
 where~$\bar u = u^+ + p_1 (u_{1, 0} + u_1^-) + \cdots + p_K (u_{K, 0} + u_K^-)$.
\end{lemma}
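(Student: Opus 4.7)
The plan is the standard mean-field derivation: I would compute the conditional expected one-step increment $E[X_{t+1}(i,c) - X_t(i,c) \mid X_t]$ on the complete graph and read off the ODE after rescaling time so that one macroscopic unit corresponds to $N$ transactions. Because, as explained at the beginning of this section, the stationary distribution does not depend on the topology, this mean-field calculation on the complete graph captures the limiting distribution of money for any connected graph.

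First I would enumerate the four elementary transitions that alter $X_t(i,c)$ by $\pm 1$. On the complete graph the transaction at step $t$ is prescribed by a uniformly chosen directed edge $\vec{xy}$ among $N(N-1)$ edges, and $X_t(i,c)$ moves only in the following four scenarios: (a) the seller $y$ is in state $(i,c-1)$ and the transaction succeeds ($+1$); (b) the seller $y$ is in state $(i,c)$ and the transaction succeeds ($-1$); (c) the buyer $x$ is in state $(i,c)$ and the transaction succeeds ($-1$); (d) the buyer $x$ is in state $(i,c+1)$ and the transaction succeeds ($+1$). Events where both endpoints pass through $(i,c)$ contribute probability $O(N^{-2})$ and are negligible in the limit.

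Next, by Lemma~\ref{lem:transition} a buyer $x \in \V_j$ can pay iff $\xi_t(x) > 0$ or $B_t(j) > 0$, so the fraction of directed edges with an eligible buyer is approximately
$$
  \frac{1}{N}\Bigl(X_t^+ + \sum_{j=1}^K \ind\{B_t(j) > 0\}\bigl(X_t(j,0) + X_t^-(j)\bigr)\Bigr),
$$
whose expectation converges to $\bar u = u^+ + \sum_j p_j(u_{j,0} + u_j^-)$. Independently, the seller lies in state $(i,c-1)$ or $(i,c)$ with probability $u_{i,c-1}$ or $u_{i,c}$, so scenarios (a) and (b) combine to produce the drift $\bar u(u_{i,c-1} - u_{i,c})$. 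In scenarios (c) and (d) it is the buyer that sits in the tracked bank: for $c > 0$ she is automatically solvent, contributing $-(u_{i,c} - u_{i,c+1})$; for $c < 0$ both $c$ and $c+1$ are nonpositive, so eligibility requires $B_t(i) > 0$ and the contribution picks up the extra factor $p_i$ in expectation.

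Assembling the four contributions with the correct signs yields the two displayed equations. The main obstacle is the bookkeeping of where the indicator $\ind\{B_t(i) > 0\}$ enters the drift: it is absent from the seller side because solvency is purely a property of the buyer, and it disappears from the buyer side when $c > 0$ because the buyer is then automatically solvent. Once the drift is identified, the passage from the discrete expected increment to the ODE after the usual rescaling of time is routine.
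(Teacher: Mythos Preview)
Your approach is essentially the paper's: compute the one-step transition probabilities for $X_t(i,c)$ on the complete graph, separate the cases $c>0$ and $c<0$ according to whether the buyer's solvency is automatic or requires $B_t(i)>0$, and read off the drift. The paper organizes the bookkeeping as ``probability of $+1$'' minus ``probability of $-1$'' with small correction terms that cancel; you organize it as four signed elementary contributions (a)--(d). Both reach the same displayed ODE.

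One point to clean up: your claim that ``events where both endpoints pass through $(i,c)$ contribute probability $O(N^{-2})$'' is false. On the complete graph the probability that, say, the buyer is in state $(i,c+1)$ and the seller in state $(i,c-1)$ is $\approx u_{i,c+1}\,u_{i,c-1}$, which is $O(1)$, and such an event changes $X_t(i,c)$ by $+2$, not $\pm 1$. The reason your computation is nonetheless correct is additivity of the expected increment: when (a) and (d) both fire you record $+1+1=+2$, when (a) and (c) both fire you record $+1-1=0$, etc., so summing the four signed contributions reproduces $E[X_{t+1}(i,c)-X_t(i,c)\mid X_t]$ exactly, overlap events included. Replace the $O(N^{-2})$ sentence with this observation and the argument is clean; the paper's presentation makes the same cancellation happen, just written as cross terms that drop out when the ``increase'' and ``decrease'' probabilities are subtracted.
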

\begin{proof}
 Note that the number of individuals who can give a coin (because they already have one or because they can borrow one from their bank) can be expressed as
\begin{equation}
\label{eq:mean-field-1}
  \bar X_t = X_t^+ + \sum_{i = 1}^K \ (X_t (i, 0) + X_t^- (i)) \,\ind \{B_t (i) > 0 \}.
\end{equation}
 Using that the number of customers of bank~$i$ with~$c$ coins increases by one when
\begin{itemize}
 \item a vertex in~$\V_i$ with~$c - 1$ coins gets a coin from a vertex not in~$\V_i$ with~$c$ coins or \vspace*{2pt}
 \item a vertex in~$\V_i$ with~$c + 1$ coins gives a coin to a vertex not in~$\V_i$ with~$c$ coins,
\end{itemize}
 we deduce that, for all~$c > 0$,
\begin{equation}
\label{eq:mean-field-2}
  \begin{array}{rcl}
    P (X_{t + 1} (i, c) = X_t (i, c) + 1 \,| \,X_t) & \n = \n &
  \displaystyle \bigg(\frac{\bar X_t}{N} - \frac{X_t (i, c)}{N} \bigg) \,\frac{X_t (i, c - 1)}{N} \vspace*{8pt} \\ && \hspace*{20pt} + \
  \displaystyle \frac{X_t (i, c + 1)}{N} \,\bigg(1 - \frac{X_t (i, c)}{N} \bigg) \end{array}
\end{equation}
 whereas, for all~$c < 0$,
\begin{equation}
\label{eq:mean-field-3}
  \begin{array}{rcl}
    P (X_{t + 1} (i, c) = X_t (i, c) + 1 \,| \,X_t) & \n = \n &
  \displaystyle \bigg(\frac{\bar X_t}{N} - \frac{X_t (i, c) \,\ind \{B_t (i) > 0 \}}{N} \bigg) \,\frac{X_t (i, c - 1)}{N} \vspace*{8pt} \\ && \hspace*{20pt} + \
  \displaystyle \frac{X_t (i, c + 1) \,\ind \{B_t (i) > 0 \}}{N} \,\bigg(1 - \frac{X_t (i, c)}{N} \bigg). \end{array}
\end{equation}
 Similarly, the number of customers of bank~$i$ with~$c$ coins decreases by one when
\begin{itemize}
 \item a vertex in~$\V_i$ with~$c$ coins gets a coin from a vertex not in~$\V_i$ with~$c + 1$ coins or \vspace*{2pt}
 \item a vertex in~$\V_i$ with~$c$ coins gives a coin to a vertex not in~$\V_i$ with~$c - 1$ coins
\end{itemize}
 therefore, for all~$c > 0$,
\begin{equation}
\label{eq:mean-field-4}
  \begin{array}{rcl}
    P (X_{t + 1} (i, c) = X_t (i, c) - 1 \,| \,X_t) & \n = \n &
  \displaystyle \bigg(\frac{\bar X_t}{N} - \frac{X_t (i, c + 1)}{N} \bigg) \,\frac{X_t (i, c)}{N} \vspace*{8pt} \\ && \hspace*{20pt} + \
  \displaystyle \frac{X_t (i, c)}{N} \,\bigg(1 - \frac{X_t (i, c - 1)}{N} \bigg) \end{array}
\end{equation}
 whereas, for all~$c < 0$,
\begin{equation}
\label{eq:mean-field-5}
  \begin{array}{rcl}
    P (X_{t + 1} (c) = X_t (c) - 1 \,| \,X_t) & \n = \n &
  \displaystyle \bigg(\frac{\bar X_t}{N} - \frac{X_t (i, c + 1) \,\ind \{B_t (i) > 0 \}}{N} \bigg) \,\frac{X_t (i, c)}{N} \vspace*{8pt} \\ && \hspace*{20pt} + \
  \displaystyle \frac{X_t (i, c) \,\ind \{B_t (i) > 0 \}}{N} \,\bigg(1 - \frac{X_t (i, c - 1)}{N} \bigg). \end{array}
\end{equation}
 In the limit as~$N \to \infty$, the process is described by a system of differential equations for the densities~$u_{i, c}$.
 Taking the expected value in~\eqref{eq:mean-field-1}, we get
 $$ \begin{array}{rcl}
    \displaystyle E \,\bigg(\frac{\bar X_t}{N} \bigg) & \n = \n &
    \displaystyle E \,\bigg(\frac{X_t^+}{N} + \sum_{i = 1}^K \bigg(\frac{X_t (i, 0)}{N} + \frac{X_t^- (i)}{N} \bigg) \,\ind \{B_t (i) > 0 \} \bigg) \vspace*{4pt} \\ & \n = \n &
    \displaystyle u^+ + \sum_{i = 1}^K \, p_i (u_{i, 0} + u_i^-) = \bar u. \end{array} $$
 In addition, combining~\eqref{eq:mean-field-2} and~\eqref{eq:mean-field-4}, we deduce that, in the limit as~$N \to \infty$,
 $$ \begin{array}{rcl}
      u_{i, c}' & \n = \n & (\bar u - u_{i, c}) \,u_{i, c - 1} + u_{i, c + 1} (1 - u_{i, c}) - (\bar u - u_{i, c + 1}) \,u_{i, c} - u_{i, c} (1 - u_{i, c - 1}) \vspace*{4pt} \\
                & \n = \n &  \bar u \,(u_{i, c - 1} - u_{i, c}) - (u_{i, c} - u_{i, c + 1}) \end{array} $$
 for all integers~$c > 0$, while combining~\eqref{eq:mean-field-3} and~\eqref{eq:mean-field-5} gives that
 $$ \begin{array}{rcl}
      u_{i, c}' & \n = \n & (\bar u - p_i u_{i, c}) \,u_{i, c - 1} + p_i u_{i, c + 1} (1 - u_{i, c}) - (\bar u - p_i u_{i, c + 1}) \,u_{i, c} - p_i u_{i, c} (1 - u_{i, c - 1}) \vspace*{4pt} \\
                & \n = \n &  \bar u \,(u_{i, c - 1} - u_{i, c}) - p_i (u_{i, c} - u_{i, c + 1}). \end{array} $$
 for all integers~$c < 0$.
\end{proof} \\ \\
 Letting~$u_c (t)$ be the total fraction of individuals with~$c$ coins~(regardless of their bank), the distribution of money at equilibrium in the large population limit is described by
 $$ f (c) = \lim_{t \to \infty} \lim_{N \to \infty} P (\xi_t (x) = c) = \lim_{t \to \infty} \lim_{N \to \infty} u_c (t) = \sum_{i = 1}^K \ \lim_{t \to \infty} \lim_{N \to \infty} u_{i, c} (t). $$
 The next lemma shows that~$f$ is an asymmetric Laplace distribution.
\begin{lemma} --
\label{lem:fixed-point}
 Assume~\eqref{eq:symmetry}.
 Then, there exist~$\mu, a, b > 0$ such that
 $$ f (c) = \left\{\hspace*{-3pt} \begin{array}{lcl} \mu \,e^{- ac} & \hbox{for} & c \geq 0 \vspace*{3pt} \\
                                                     \mu \,e^{+ bc} & \hbox{for} & c \leq 0. \end{array} \right. $$
\end{lemma}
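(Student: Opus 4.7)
The plan is to pass to the equilibrium of the mean-field system of Lemma~\ref{lem:mean-field} and exploit the bank symmetry~\eqref{eq:symmetry}. Because the dynamics and the state space~$S$ are invariant under any permutation of the banks when $N_i = N/K$ and $R_i = R/K$, the fixed point of the ODE system inherits this symmetry, so at equilibrium we may write $u_{i,c} = v_c/K$ for a common function $v_c$ and $p_i = p$ for every~$i$. Setting $u_{i,c}' = 0$ in~\eqref{eq:mean-field-0} then produces the two linear recurrences
\begin{equation*}
    v_{c+1} - (1+\bar u)\,v_c + \bar u\,v_{c-1} = 0 \quad (c > 0), \qquad
    v_{c+1} - (1 + \bar u/p)\,v_c + (\bar u/p)\,v_{c-1} = 0 \quad (c < 0),
\end{equation*}
and the target function satisfies $f(c) = \sum_{i} u_{i,c} = v_c$.

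Next, I would solve these two recurrences. Their characteristic polynomials factor as $(x-1)(x-\bar u)$ and $(x-1)(x-\bar u/p)$ respectively, so the general solutions read $v_c = \alpha + \beta\,\bar u^c$ for $c \geq 0$ and $v_c = \alpha' + \beta'\,(\bar u/p)^c$ for $c \leq 0$. The constant modes must vanish because $f$ is a probability mass function on~$\Z$: if $\alpha \neq 0$ then $v_c \not\to 0$ as $c \to +\infty$ and $\sum_{c \geq 0} v_c = \infty$, and similarly, once we know $\bar u / p > 1$ so that $(\bar u/p)^c \to 0$ as $c \to -\infty$, the summability on the negative side forces $\alpha' = 0$. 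Matching the two formulas at $c = 0$ then yields $\beta = \beta' = \mu$, whence $f(c) = \mu\,e^{-ac}$ for $c \geq 0$ with $a = -\ln \bar u$, and $f(c) = \mu\,e^{+bc}$ for $c \leq 0$ with $b = \ln(\bar u/p)$.

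The main obstacle is to certify the two sign conditions $0 < \bar u < 1$ and $\bar u > p$, which produce $a, b > 0$. For this I would combine the bookkeeping identity $u_i^+ + u_{i,0} + u_i^- = 1/K$ (so that $u^+ + \sum_i (u_{i,0} + u_i^-) = 1$) with the symmetric form of $\bar u$:
\begin{equation*}
    \bar u = u^+ + \sum_{i=1}^K p_i\,(u_{i,0} + u_i^-) = p + (1-p)\,u^+.
\end{equation*}
Since $M > 0$ forces a strictly positive fraction $u^+$ of individuals to hold at least one coin at equilibrium, and since no bank can stay nonempty with probability one (so $p < 1$), the above expression gives at once $p < \bar u < 1$, as needed. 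Positivity of $\mu = v_0$ is automatic from $f$ being nontrivial, completing the identification of $f$ as an asymmetric Laplace distribution. The explicit values of $\mu$, $a$, $b$ in~\eqref{eq:Laplace-2} are then pinned down by two further constraints, namely conservation of the money temperature and the vanishing of the relative bank content $B_t(i)/R_i$, which are addressed in the subsequent lemmas.
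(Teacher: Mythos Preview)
Your proof is correct and follows essentially the same route as the paper: reduce by symmetry, set the mean-field ODE~\eqref{eq:mean-field-0} to zero, solve the resulting linear recurrence on each half-line, and use decay at infinity to kill the constant mode. The only cosmetic difference is that you solve the second-order recurrence directly via its characteristic polynomial $(x-1)(x-\bar u)$ and $(x-1)(x-\bar u/p)$, whereas the paper passes to first differences $v_c = u_c - u_{c+1}$ and telescopes; your identity $\bar u = p + (1-p)\,u^+$ to certify $p < \bar u < 1$ is in fact a cleaner justification of $a,b>0$ than the paper provides.
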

\begin{proof}
 The assumption~\eqref{eq:symmetry} implies that, at  least in the large population limit and starting from the configuration where all the individuals have exactly~$T$ coins,
 $$ u_c (t) = K u_{1, c} (t) = \cdots = K u_{K, c} (t) \quad \hbox{and} \quad p_1 = \cdots = p_K = p. $$
 In particular, summing~\eqref{eq:mean-field-0} over~$i = 1, 2, \ldots, K$, we get
\begin{equation}
\label{eq:fixed-point}
  u_c' = \left\{\begin{array}{lcl}
         \bar u \,(u_{c - 1} - u_c) -   (u_c - u_{c + 1}) & \hbox{for all} & c > 0 \vspace*{4pt} \\
         \bar u \,(u_{c - 1} - u_c) - p (u_c - u_{c + 1}) & \hbox{for all} & c < 0 \end{array} \right.
\end{equation}
 where~$\bar u = u^+ + p (u_0 + u^-)$.
 Letting~$v_c = u_c - u_{c + 1}$, we can rewrite~\eqref{eq:fixed-point} as
 $$ u_c' = \left\{\begin{array}{lcl}
           \bar u \,v_{c - 1} -   v_c & \hbox{for all} & c > 0 \vspace*{4pt} \\
           \bar u \,v_{c - 1} - p v_c & \hbox{for all} & c < 0 \end{array} \right. $$
 In particular, setting~$u_c' = 0$ to find the fixed point, we get
 $$ v_c = \left\{\begin{array}{l}
          \bar u \,v_{c - 1} = \cdots = \bar u^c v_0 = \bar u^c (u_0 - u_1) \quad \hbox{for all} \quad c > 0 \vspace*{5pt} \\
         (p / \bar u) \,v_{c + 1} = \cdots = (p / \bar u)^{- c - 1} v_{-1} = (p / \bar u)^{- c - 1} (u_{-1} - u_0) \quad \hbox{for all} \quad c < 0 \end{array} \right. $$
 from which it follows that
 $$ \begin{array}{l}
    \displaystyle u_c = u_0 - \sum_{n = 0}^{c - 1} \,v_n = u_0 - \sum_{n = 0}^{c - 1} \,\bar u^n \,v_0 = u_0 - \frac{1 - \bar u^c}{1 - \bar u} \ (u_0 - u_1)
    \quad \hbox{for} \quad c > 0 \vspace*{4pt} \\
    \displaystyle u_c = u_0 + \sum_{n = 1}^{- c} \,v_{- n} = u_0 + \sum_{n = 1}^{- c} \,\bigg(\frac{p}{\bar u} \bigg)^{n - 1} \,v_{-1} = u_0 - \frac{1 - (p / \bar u)^{-c}}{1 - p / \bar u} \ (u_0 - u_{-1})
    \quad \hbox{for} \quad c < 0. \end{array} $$
 Finally, because~$u_c \to 0$ as~$c \to \pm \infty$, we must have
 $$ u_c = u_0 \,\bar u^c \quad \hbox{for all} \quad c > 0 \qquad \hbox{and} \qquad u_c = u_0 (p / \bar u)^c \quad \hbox{for all} \quad c < 0, $$
 showing that the lemma holds for the parameters
 $$ \begin{array}{rcl}
      \mu & \n = \n & u_0 = \lim_{t \to \infty} u_0 (t) > 0 \vspace*{4pt} \\
        a & \n = \n & - \ln (\bar u) = \lim_{t \to \infty} (- \ln (\bar u (t))) > 0 \vspace*{4pt} \\
        b & \n = \n & - \ln (\bar u / p) = \lim_{t \to \infty} (- \ln (\bar u (t) / p (t))) > 0. \end{array} $$
 This completes the proof.
\end{proof} \\ \\
 To complete the proof of the theorem, the next step is to find explicit expressions of the parameters~$\mu, a, b$, which will be done by deriving a system of three equations involving the parameters.
 The first two equations are obtained in the next lemma by using that~$f$ is a density function and that the money temperature is preserved by the dynamics.
\begin{lemma} --
\label{lem:density-mean}
 For all~$M$ and~$R = \rho M$, we have
\begin{equation}
\label{eq:density-mean}
  \frac{\mu}{a} + \frac{\mu}{b} = 1 \quad \hbox{and} \qquad \frac{\mu}{a^2} - \frac{\mu}{b^2} = \frac{M}{N} = T.
\end{equation}
\end{lemma}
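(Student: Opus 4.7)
The plan is to derive the two equations in \eqref{eq:density-mean} from two natural constraints that must be satisfied by the limiting distribution of money: (a) $f$ is a probability mass function so its values sum to one, and (b) the mean number of coins per individual is conserved by the dynamics and therefore equals the money temperature $T = M/N$.

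First, for the normalization, I would note that for each $t$ and $x$ we have $\sum_c P(\xi_t(x) = c) = 1$, hence $\sum_c f(c) = 1$. Plugging in the Laplace form from Lemma~\ref{lem:fixed-point},
\begin{equation*}
  \sum_{c = 0}^{\infty} \mu \,e^{-ac} + \sum_{c = -\infty}^{-1} \mu \,e^{bc} \ = \ \frac{\mu}{1 - e^{-a}} + \frac{\mu \,e^{-b}}{1 - e^{-b}} \ = \ 1,
\end{equation*}
where the sums can be extended to all of $\mathbb{Z}$ because $f$ decays exponentially and the support $[-R, M]$ is asymptotically negligible in the large population/temperature limit. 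In this limit, the parameters $a, b$ must tend to zero (otherwise $f$ would be concentrated on $O(1)$ values while the mean has to grow like $T$), and the first-order Taylor expansion $1 - e^{-a} = a + O(a^2)$ yields $\mu / a + \mu / b = 1$ to leading order.

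Second, for the mean, observe that the total number of coins is preserved by the dynamics, so $\sum_{x \in \V} \xi_t (x) = M$ for all $t$, which gives $\sum_{x \in \V} E (\xi_t (x)) = M$. Under the symmetry assumption~\eqref{eq:symmetry}, all the vertices within a bank are exchangeable, and the banks themselves are interchangeable, so $E (\xi_\infty (x)) = M / N = T$ for every $x$. Equivalently $\sum_c c f(c) = T$, and plugging in the Laplace form and using the standard identities $\sum_{c \geq 0} c \,e^{-ac} = e^{-a} / (1 - e^{-a})^2$ and $\sum_{c \leq -1} c \,e^{bc} = -e^{-b} / (1 - e^{-b})^2$ gives, after expanding $1 - e^{-a} \sim a$ and $1 - e^{-b} \sim b$ to leading order,
\begin{equation*}
  \sum_c c f(c) \ \sim \ \frac{\mu}{a^2} - \frac{\mu}{b^2} \ = \ T.
\end{equation*}

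The main technical point to justify is the passage from the discrete geometric series to the leading-order expressions $\mu / a$, $\mu / b$, $\mu / a^2$, $\mu / b^2$. This rests on the fact that $\mu, a, b = O(1 / T)$ so the corrections $\mu \cdot O(1)$ (for normalization) and $\mu \cdot O(1/a), \mu \cdot O(1/b)$ (for the mean) are of order $1/T$ and $1$ respectively, which are negligible compared to the leading order terms of order $1$ and $T$. The truncation of the tails beyond $[-R, M]$ is controlled by the same exponential decay. Both equations are therefore exact in the large $N, T$ limit, which is precisely the regime of Theorem~\ref{th:laplace}.
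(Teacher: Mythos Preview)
Your argument is correct and rests on exactly the same two constraints the paper uses: that $f$ integrates (sums) to one, and that the mean of $f$ equals the conserved money temperature $T$. The only difference is computational. The paper treats $f$ directly as a continuous density and evaluates $\int_{-\infty}^{\infty} f(c)\,dc = \mu/a + \mu/b$ and $\int_{-\infty}^{\infty} c f(c)\,dc = \mu/a^2 - \mu/b^2$, so the two identities in~\eqref{eq:density-mean} drop out exactly with no further work. You instead keep $f$ discrete, sum the geometric series, and then Taylor-expand $1 - e^{-a} \sim a$, $1 - e^{-b} \sim b$ in the large-$T$ regime to recover the same leading-order relations. Your route is a bit more explicit about where the large-$T$ approximation enters (and correctly notes that $a,b \to 0$ is forced by the mean growing like $T$), whereas the paper absorbs that approximation into the passage from sums to integrals; both are equally valid in the regime of Theorem~\ref{th:laplace}. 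One caution: your sentence invoking $\mu,a,b = O(1/T)$ reads as if it appeals to Lemma~\ref{lem:origin}, which in the paper is proved \emph{from} \eqref{eq:density-mean}; to avoid circularity you should rely only on the self-contained observation you already made, namely that $a,b \to 0$ because the mean must scale like $T$.
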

\begin{proof}
 To begin with, observe that
 $$ \int_{- \infty}^{\infty} f (c) \,dc = \int_0^{\infty} \mu \,e^{- ac} \,dc + \int_{- \infty}^0 K_ 0 \,e^{+ bc} \,dc = \frac{\mu}{a} + \frac{\mu}{b}. $$
 Because the function~$f$ is a density function, the integral above must be equal to one, which gives the first equation in the lemma.
 Note also that
 $$ \begin{array}{l}
    \displaystyle \int_{- \infty}^{\infty} c f (c) \,dc = \int_0^{\infty} \mu \,c e^{- ac} \,dc + \int_{- \infty}^0 \mu \,c e^{+ bc} \,dc \vspace*{8pt} \\ \hspace*{80pt} =
    \displaystyle \int_0^{\infty} \frac{\mu \,c e^{- ac}}{a} \,dc - \int_{- \infty}^0 \frac{\mu \,c e^{+ bc}}{b} \,dc = \frac{\mu}{a^2} - \frac{\mu}{b^2}. \end{array} $$
 Because the system is conservative and the money temperature is preserved by the dynamics, the integral above, which represents the mean number of coins per individual at equilibrium, must be equal to the money temperature~$T = M/N$.
 This gives the second equation.
\end{proof} \\ \\
 The third equation involving~$\mu, a, b$ expresses the fact that, at equilibrium, the total number of coins shared by the individuals not in debt is of the order of~$M + R$, the total number of coins in the system, which holds because the number of coins in each of the banks becomes much smaller than~$R$ in the large population limit.
 The number of coins in the banks has a negative drift of the order of~$u_0 (t)$ so, to show that the number of coins in the banks indeed becomes relatively small, the next step is to prove that, at equilibrium, the fraction of individuals with exactly zero coin is not too small, of the order of the reciprocal of the money temperature.
\begin{lemma} --
\label{lem:origin}
 There exists~$C_0 > 0$ such that~$\mu = C_0/T$.
\end{lemma}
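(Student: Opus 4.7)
The plan is to combine the two moment identities of Lemma~\ref{lem:density-mean} with the fact that the total debt across the population is capped by the banks' combined reserve~$R$. Factoring $\mu/a^2 - \mu/b^2 = T$ using $\mu/a + \mu/b = 1$ yields the cleaner pair
$$ \frac{1}{a} + \frac{1}{b} = \frac{1}{\mu}, \qquad \frac{1}{a} - \frac{1}{b} = T, $$
so that $1/a = (1/\mu + T)/2$ and $1/b = (1/\mu - T)/2$. In particular, positivity of~$b$ already forces $\mu T < 1$, which is the easy upper half of the scaling.

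For the matching lower bound I would exploit the state-space constraint. By definition of~$S$, every configuration satisfies $\sum_{z \in \V_i} \xi_t(z)\,\ind\{\xi_t(z) < 0\} \geq -R_i$ for each bank~$i$; summing over~$i$ and dividing by~$N$ gives the almost-sure inequality
$$ \frac{1}{N} \sum_{x \in \V} \xi_t(x)^- \leq \frac{R}{N} = \rho T, $$
valid for every~$t$ and every~$N$. Taking~$N \to \infty$ and then~$t \to \infty$, the left-hand side converges to the first absolute moment of the negative tail of the Laplace density~$f$ of Lemma~\ref{lem:fixed-point}:
$$ \int_{-\infty}^0 (-c)\, \mu e^{bc}\, dc = \frac{\mu}{b^2}. $$
Hence $\mu/b^2 \leq \rho T$.

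Substituting $1/b = (1/\mu - T)/2$ transforms this last inequality into $(1 - \mu T)^2 \leq 4\rho\,(\mu T)$. Setting $x = \mu T \in (0,1)$, this is the quadratic $x^2 - 2(1 + 2\rho)\, x + 1 \leq 0$, whose smaller root is $(\sqrt{1 + \rho} - \sqrt{\rho})^2$. Therefore $\mu T \geq (\sqrt{1 + \rho} - \sqrt{\rho})^2 > 0$, and together with $\mu T < 1$ this proves the existence of a constant~$C_0 > 0$ with $\mu = C_0/T$.

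The step that genuinely needs care is the passage from the finite-$N$ debt inequality to a bound on~$f$. This relies on the convergence of the empirical distribution of coin counts to~$f$ (obtained from Lemmas~\ref{lem:mean-field} and~\ref{lem:fixed-point}) being strong enough to integrate the unbounded test function $c \mapsto (-c)\,\ind\{c < 0\}$; fortunately, the uniform-in-$N$ estimate $\sum_x \xi_t(x)^-/N \leq \rho T$ supplies precisely the tightness needed to pass the inequality to the limit.
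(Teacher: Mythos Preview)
Your argument is correct, but it is considerably more elaborate than what the paper does. The paper's proof is a two-line algebraic manipulation using only the pair of equations from Lemma~\ref{lem:density-mean}: setting~$\delta = b/a$, the relation~$\mu/a^2 - \mu/b^2 = T$ factors (via $\mu/a + \mu/b = 1$) as $1/a - 1/b = T$, which gives $a = (1 - 1/\delta)/T$ and then $\mu T = (\delta - 1)/(\delta + 1)$. Since $T > 0$ forces $\delta > 1$, this quantity is positive, and that is the whole proof.

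You reproduce the factorization $1/a - 1/b = T$ and the upper bound $\mu T < 1$, but then bring in a genuinely new ingredient for the lower bound: the state-space constraint $\sum_x \xi_t(x)^- \leq R$, which after passing to the limit yields $\mu/b^2 \leq \rho T$ and hence the explicit bound $\mu T \geq (\sqrt{1+\rho} - \sqrt{\rho})^2$. This is interesting because your lower bound is exactly the value of $\mu T$ eventually computed in Lemma~\ref{lem:parameters}, so you are in effect short-circuiting part of the later analysis; it also gives a~$C_0$ depending only on~$\rho$, whereas the paper's $C_0 = (\delta-1)/(\delta+1)$ is expressed in terms of the still-undetermined ratio~$\delta$. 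The cost is the limit-passage step you flag: you must justify integrating the unbounded test function $c \mapsto (-c)\ind\{c<0\}$ against the mean-field limit, which the paper avoids entirely at this stage (and only faces a comparable issue later, in Lemma~\ref{lem:positive-mean}, after Lemma~\ref{lem:bank-down} has supplied more control). At the level of rigor the paper adopts for the mean-field limit, your tightness remark is adequate, but the paper's route is cleaner here precisely because it needs nothing beyond the two identities already established.
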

\begin{proof}
 Let~$\delta = b/a > 1$.
 Using the second then the first equation in~\eqref{eq:density-mean},
 $$ T = \frac{\mu}{a^2} - \frac{\mu}{b^2} = \mu \bigg(\frac{1}{a} + \frac{1}{b} \bigg) \bigg(\frac{1}{a} - \frac{1}{b} \bigg) = \frac{1}{a} - \frac{1}{b} = \frac{1}{a} - \frac{1}{\delta a} = \bigg(1 - \frac{1}{\delta} \bigg) \frac{1}{a} $$
 from which it follows, using again the first equation in~\eqref{eq:density-mean}, that
 $$ a = \bigg(1 - \frac{1}{\delta} \bigg) \frac{1}{T} \quad \hbox{and} \quad \mu = \frac{1}{1/a + 1/b} = \frac{(1 - 1/\delta)(\delta - 1)}{(1 - 1/\delta) + (\delta - 1)} \ \frac{1}{T} = \frac{(\delta - 1)^2}{\delta^2 - 1} \ \frac{1}{T}. $$
 This shows that the lemma holds for~$C_0 = (\delta - 1)^2 / (\delta^2 - 1) > 0$.
\end{proof} \\ \\
 Using Lemma~\ref{lem:origin}, we can now prove that, at equilibrium, the number of coins in each of the banks is much smaller than~$R$, the initial number of coins in the banks.
\begin{lemma} --
\label{lem:bank-down}
 In the limit as~$N \to \infty$, $\lim_{t \to \infty} B_t (i) / R \to 0$.
\end{lemma}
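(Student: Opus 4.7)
The plan is to exhibit $B_t(i)$, in the large~$N$ limit, as a reflected birth--death chain whose transition rates are determined by the stationary densities from Lemma~\ref{lem:fixed-point} and whose drift is strictly negative away from the origin. This will force the stationary mean of $B_t(i)$ to remain bounded in~$N$, making it negligible compared with $R = \rho M$.

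First, I would compute the one--step drift of $B_t(i)$. Because buyer and seller are sampled uniformly on the complete graph, $B_t(i)$ decreases by~$1$ exactly when the buyer~$x$ lies in~$\V_i$ with $\xi(x) \leq 0$ and bank~$i$ is non--empty (so that~$x$ can borrow), and it increases by~$1$ exactly when the seller~$y$ lies in~$\V_i$ with $\xi(y) < 0$ and a transfer actually takes place (so that $y$ returns the coin she just received to bank~$i$). Exactly as in the proof of Lemma~\ref{lem:mean-field}, this yields, in the large~$N$ limit,
\[
  E \bigl[ B_{t+1}(i) - B_t(i) \,\big|\, X_t \bigr] \;=\; u_i^-(t) \,\bar u(t) \;-\; \bigl( u_{i,0}(t) + u_i^-(t) \bigr) \,\ind \{B_t(i) > 0\}.
\]

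Next, I would evaluate this drift at the stationary densities. The identity $u_c = u_0 \bar u^c$ for $c > 0$ in Lemma~\ref{lem:fixed-point} forces $\bar u < 1$ by summability, and Lemma~\ref{lem:origin} yields the strictly positive lower bound $u_{i,0} = \mu/K = C_0/(KT) > 0$. Hence, when $B_t(i) > 0$ and the densities lie at their fixed--point values, the drift equals
\[
  u_i^- \bar u - (u_{i,0} + u_i^-) \;=\; -\,u_{i,0} \,-\, u_i^-(1 - \bar u) \;\leq\; -\,u_{i,0} \;<\; 0,
\]
a strictly negative quantity independent of~$N$ and~$R$. Consequently $B_t(i)$ is, to leading order in~$N$, a reflected birth--death chain on $\{0, 1, \ldots, R/K\}$ with constant birth rate $\lambda = u_i^-\bar u$ and death rate $\delta = u_{i,0} + u_i^- > \lambda$, whose stationary distribution is geometric with mean $\lambda/(\delta - \lambda) = u_i^-\bar u/(u_{i,0} + u_i^-(1 - \bar u))$, a quantity independent of~$N$. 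Since $R = \rho N T$ grows without bound with~$N$, we conclude that $E[B_t(i)]/R \to 0$ and therefore $B_t(i)/R \to 0$ in probability via Markov's inequality.

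The principal obstacle will be closing the self--consistency loop between the $u$--dynamics and $B_t(i)$, since the mean--field equations of Lemma~\ref{lem:mean-field} involve the parameters $p_i = P(B_t(i) > 0)$, which are themselves statistics of the very chain we are trying to control. A natural route is to first argue that $p_i$ stays bounded strictly away from $0$ and from~$1$ at equilibrium, then invoke propagation of chaos to concentrate the densities $u_{i,c}(t)$ at the self--consistent fixed point, and finally transfer the stationary--mean bound for the limiting birth--death chain back to $B_t(i)$ itself.
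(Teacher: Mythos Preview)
Your proposal is correct and shares the paper's core mechanism: both identify that at the mean-field fixed point the bank process $B_t(i)$ has strictly negative drift, and both invoke Lemma~\ref{lem:origin} to bound $u_{i,0}$ away from zero. The execution differs. The paper records the up/down transition probabilities as $u^+ u_i^-$ and $(u_i^- + u_{i,0})(u^+ + u_0)$ (a slightly different bookkeeping from yours, though yielding the same sign of drift), then applies the optional stopping theorem to a biased gambler's ruin to show that excursions of $B_t(i)$ above the threshold $\sqrt{R_i T}$ have probability at most $\exp(-C_0\sqrt{\rho_i N})$; since $\sqrt{R_i T}/R \to 0$, the claim follows. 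Your route via the geometric stationary law of the limiting birth--death chain plus Markov's inequality is more direct and gives the same conclusion, though without the quantitative high-probability bound the paper extracts. One small caveat: your $\lambda = u_i^-\bar u$ and $\delta = u_{i,0}+u_i^-$ are the \emph{gross} gain/loss probabilities, which double-count the overlap event (buyer borrows from bank~$i$ while seller repays bank~$i$, net change zero) and hence are not literally the $\pm 1$ rates of the chain; nevertheless $\delta-\lambda$ is the correct drift, the stationary mean remains bounded in~$N$, and so the argument survives. The self-consistency issue you flag at the end is genuine and is handled with the same level of informality in the paper.
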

\begin{proof}
 The number of coins in bank~$i$ increases by one each time an individual with at least one coin gives one coin to a customer of bank~$i$ who is in debt therefore
\begin{equation}
\label{eq:bank-down-1}
  P (B_{t + 1} (i) = B_t (i) + 1 \,| \,X_t) = u^+ (t) \,u_i^- (t).
\end{equation}
 Similarly, the number of coins in bank~$i$ decreases by one each time a customer of bank~$i$ with zero coin or in debt gives one coin to an individual not in debt therefore
\begin{equation}
\label{eq:bank-down-2}
  P (B_{t + 1} (i) = B_t (i) - 1 \,| \,X_t, B_t (i) > 0) = (u_i^- (t) + u_{i, 0} (t))(u^+ (t) + u_0 (t)).
\end{equation}
 The transition probabilities~\eqref{eq:bank-down-1} and~\eqref{eq:bank-down-2} show that, once there is at least one individual in debt, the process~$(B_t (i))$ is dominated by a one-dimensional symmetric random walk with reflecting boundary at zero so the process reaches zero after a time of the order of at most~$R^2$.
 We now look at the time it takes for bank~$i$ to return away from state zero.
 It follows from the transition probabilities~\eqref{eq:bank-down-1} and~\eqref{eq:bank-down-2} that, in the large population limit and at equilibrium,
 $$ \begin{array}{rcl}
      P (B_{t + 1} (i) = B_t (i) + 1 \,| \,X_t) & \n = \n & u^+ \,u_i^- = u^+ \,u^- / K \vspace*{4pt} \\
      P (B_{t + 1} (i) = B_t (i) - 1 \,| \,X_t, B_t (i) > 0) & \n = \n & (u^- / K + u_0 / K)(u^+ + u_0) = u^+ \,u^- / K + u_0 / K. \end{array} $$
 Because~$\mu = C_0/T$ according to Lemma~\ref{lem:origin}, near the stationary distribution,
 $$ u_0 \geq \frac{\mu}{2} = \frac{C_0}{2T} \qquad \hbox{and} \qquad \frac{u^+ \,u^- / K + u_0 / K}{u^+ \,u^- / K} \geq 1 + 4 u_0 \geq 1 + \frac{2 C_0}{T}. $$
 In particular, letting~$\tau = \inf \{t : B_t (i) = 0 \ \hbox{or} \ B_t (i) > \sqrt{R_i T} \}$, recalling~\eqref{eq:bank-down-1} and~\eqref{eq:bank-down-2}, and applying the optional stopping theorem, we deduce that
 $$ \begin{array}{l}
      P (B_{\tau} (i) > \sqrt{R_i T} \,| \,B_0 (i) = 1) \leq
    \displaystyle \frac{1 - (1 + 2 C_0/T)}{1 - (1 + 2 C_0/T)^{\sqrt{R_i T}}} \leq \bigg(\frac{1}{1 + 2 C_0/T} \bigg)^{\sqrt{R_i T} - 1} \vspace*{8pt} \\ \hspace*{80pt} \leq
    \displaystyle \bigg(1 - \frac{C_0}{T} \bigg)^{\sqrt{R_i T} - 1} \approx \exp \bigg(- C_0 \,\sqrt{\frac{R_i}{T}} \bigg) = \exp (- C_0 \sqrt{\rho_i N}). \end{array} $$
 This, together with the strong Markov property, implies that the expected amount of time for the number of coins in bank~$i$ to return above the threshold~$\sqrt{R_i T}$ is larger than
 $$ E \bigg(\geometric \Big(\exp (- C_0 \sqrt{\rho_i N}) \Big) \bigg) = \exp (C_0 \sqrt{\rho_i N}). $$
 Observing also that~$\sqrt{R_i T} / R \to 0$ in the limit as~$N \to \infty$, the result follows.
\end{proof} \\ \\
 Using the previous lemma, we can now derive a third equation involving the parameters of the asymmetric Laplace distribution.
\begin{lemma} --
\label{lem:positive-mean}
 For all~$M$ and~$R = \rho M$, we have
\begin{equation}
\label{eq:positive-mean}
  \frac{\mu}{a^2} = \frac{M + R}{N} = (1 + \rho) T.
\end{equation}
\end{lemma}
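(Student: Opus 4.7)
The plan is to obtain a third equation linking $\mu$, $a$, $b$ by computing the average positive balance per individual in two different ways. On the one hand, decomposing the population at time~$t$ into solvent and indebted individuals gives the balance identity
$$ \sum_{x \,:\, \xi_t(x) > 0} \xi_t(x) \ - \sum_{x \,:\, \xi_t(x) < 0} (- \xi_t(x)) \ = \ M, $$
while the total debt equals the total amount borrowed from the banks, i.e.\
$$ \sum_{x \,:\, \xi_t(x) < 0} (- \xi_t(x)) \ = \ \sum_{i=1}^K (R_i - B_t(i)) \ = \ R - B_t, $$
where $B_t = B_t(1) + \cdots + B_t(K)$. Combining these two identities gives the pointwise relation $\sum_{x \,:\, \xi_t(x) > 0} \xi_t(x) = M + R - B_t$.

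I would then take the double limit $t \to \infty$ followed by $N \to \infty$ after dividing by $N$. Since $R/N = \rho T$, and since Lemma~\ref{lem:bank-down} guarantees $B_t(i)/R \to 0$ at equilibrium in the large-$N$ limit, we get $B_t/N = (B_t/R)(R/N) \to 0$, and hence the average positive capital per individual converges to $(1 + \rho) T$.

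On the other hand, this same quantity can be computed from the limiting distribution of money given in Lemma~\ref{lem:fixed-point}:
$$ \lim_{t \to \infty} \lim_{N \to \infty} \frac{1}{N} \sum_{x \,:\, \xi_t(x) > 0} \xi_t(x) \ = \ \sum_{c \geq 1} c \,f(c) \ = \ \sum_{c \geq 1} c \,\mu \,e^{-ac}, $$
and since we work in the large-temperature regime where $a = O(1/T)$ is small, the exact geometric expression $\mu e^{-a}/(1 - e^{-a})^2$ is well approximated to leading order by its continuous analog $\int_0^{\infty} c \mu e^{-ac} \,dc = \mu/a^2$. Equating the two computations yields~\eqref{eq:positive-mean}.

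The main obstacle is not the algebra but the two limiting arguments: first, justifying that $B_t/N \to 0$ at equilibrium as $N \to \infty$, which is supplied by Lemma~\ref{lem:bank-down}; and second, controlling the discrete-to-continuum approximation of $\sum_{c \geq 1} c \mu e^{-ac}$ by $\mu/a^2$, which is routine because $a \to 0$ in the large-$T$ limit (as already implied by the first two equations of Lemma~\ref{lem:density-mean}, from which one can solve for $a$ in terms of $T$ and $\delta = b/a$). Once these two points are in hand, the lemma follows by simply equating the two expressions for $\mu/a^2$.
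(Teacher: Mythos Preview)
Your proposal is correct and follows essentially the same route as the paper: both arguments use Lemma~\ref{lem:bank-down} to conclude that the banks are essentially empty at equilibrium so that the total positive capital is $M+R$, and then identify this with $\mu/a^2$ via the Laplace form of $f$. You are simply more explicit than the paper about the balance identity $\sum_{\xi_t(x)>0}\xi_t(x)=M+R-B_t$ and about the discrete-to-continuous replacement of $\sum_{c\ge 1} c\mu e^{-ac}$ by $\int_0^\infty c\mu e^{-ac}\,dc$, both of which the paper handles in a single line.
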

\begin{proof}
 According to Lemma~\ref{lem:bank-down}, in the large population limit and at equilibrium, the total number of coins in all the banks is much smaller than~$R$, so the number of coins shared by all the individuals not in debt converges to~$M + R$, i.e.,
\begin{equation}
\label{eq:positive-mean-1}
 \lim_{t \to \infty} \lim_{N \to \infty} \frac{1}{N} \sum_{x \in \V} \xi_t (x) \,\ind \{\xi_t (x) > 0 \} = \lim_{N \to \infty} \frac{M + R}{N} = (1 + \rho) T.
\end{equation}
 Because the left-hand side of~\eqref{eq:positive-mean-1} is also equal to
 $$ \int_0^{\infty} c f (c) \,dc = \int_0^{\infty} \mu \,c e^{- ac} \,dc = \int_0^{\infty} \frac{\mu \,c e^{- ac}}{a} \,dc = \frac{\mu}{a^2} $$
 the lemma follows.
\end{proof} \\ \\
 The final step to prove the theorem is to use the three equations in Lemmas~\ref{lem:density-mean} and~\ref{lem:positive-mean} to express the three parameters~$\mu, a, b$ of the asymmetric Laplace distribution as a function of the money temperature~$T$ and the fraction of coins from the banks~$\rho$.
\begin{lemma} --
\label{lem:parameters}
 For all~$M$ and~$R = \rho M$, we have
 $$ \mu = \frac{1}{T} \bigg(\sqrt{1 + \rho} - \sqrt{\rho} \bigg)^2, \qquad
      a = \frac{1}{T} \bigg(1 - \sqrt{\frac{\rho}{1 + \rho}} \bigg), \qquad
      b = \frac{1}{T} \bigg(\sqrt{\frac{1 + \rho}{\rho}} - 1 \bigg). $$
\end{lemma}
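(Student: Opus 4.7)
The plan is to combine the three scalar equations already proven in Lemmas~\ref{lem:density-mean} and~\ref{lem:positive-mean} and solve the resulting $3 \times 3$ nonlinear system for the three unknowns $\mu$, $a$, $b$. The strategy is to first isolate $a$ and $b$ as explicit functions of $\mu$ by combining equations~\eqref{eq:density-mean} and~\eqref{eq:positive-mean}, and then to substitute back into the single remaining equation to pin down $\mu$.

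More precisely, the first step is to subtract~\eqref{eq:positive-mean} from the second identity in~\eqref{eq:density-mean}: since $\mu/a^2 - \mu/b^2 = T$ and $\mu/a^2 = (1 + \rho) T$, we immediately obtain $\mu / b^2 = \rho T$. This gives the clean pair
$$ a = \sqrt{\frac{\mu}{(1 + \rho) T}}, \qquad b = \sqrt{\frac{\mu}{\rho T}}, $$
so that $\mu/a = \sqrt{\mu (1 + \rho) T}$ and $\mu/b = \sqrt{\mu \rho T}$.

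The second step is to feed these into the normalization condition $\mu/a + \mu/b = 1$ from~\eqref{eq:density-mean}, which reduces to the single scalar equation $\sqrt{\mu T} \,(\sqrt{1 + \rho} + \sqrt{\rho}) = 1$. Rationalizing the denominator via $1/(\sqrt{1 + \rho} + \sqrt{\rho}) = \sqrt{1 + \rho} - \sqrt{\rho}$ and squaring yields
$$ \mu = \frac{1}{T} \bigl(\sqrt{1 + \rho} - \sqrt{\rho} \bigr)^2, $$
which is the advertised formula for $\mu$. Plugging this back into the expressions for $a$ and $b$ obtained in the first step and simplifying the square roots gives the stated expressions for $a$ and $b$.

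There is no genuine analytic obstacle here, as all three equations are already available; the proof is a short algebraic manipulation. The only minor care needed is to keep track of signs when taking square roots, but this is automatic because Lemma~\ref{lem:fixed-point} already guarantees $\mu, a, b > 0$, so we consistently choose the positive branch. I would present the derivation in the order above (compute $\mu/b^2$, solve for $a$ and $b$ in terms of $\mu$, substitute into the normalization equation, and back-substitute) so that each step is a one-line deduction from the previous one.
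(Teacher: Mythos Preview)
Your proposal is correct and follows essentially the same approach as the paper: both derive $\mu/b^2 = \rho T$ by subtraction and then solve the resulting algebraic system. The only cosmetic difference is the order of elimination---you express $a,b$ in terms of $\mu$ and solve for $\mu$ first via the normalization equation, whereas the paper uses the factorization $\mu/a^2 - \mu/b^2 = (\mu/a + \mu/b)(1/a - 1/b)$ to obtain $1/a - 1/b = T$ and solves for $a,b$ first---but both routes are short and equivalent.
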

\begin{proof}
 Recall from Lemmas~\ref{lem:density-mean} and~\ref{lem:positive-mean} that
\begin{equation}
\label{eq:parameters-1}
  \frac{\mu}{a} + \frac{\mu}{b} = 1, \qquad \frac{\mu}{a^2} - \frac{\mu}{b^2} = T, \qquad \frac{\mu}{a^2} = (1 + \rho) T.
\end{equation}
 Subtracting the second equation from the third equation in~\eqref{eq:parameters-1}, then taking the square root of the ratio with the third equation, we get
\begin{equation}
\label{eq:parameters-2}
  \frac{\mu}{b^2} = \frac{\mu}{a^2} - \bigg(\frac{\mu}{a^2} - \frac{\mu}{b^2} \bigg) = \rho T \quad \hbox{and} \quad
  \frac{a}{b} = \sqrt{\frac{\mu}{b^2}} \bigg/ \sqrt{\frac{\mu}{a^2}} = \sqrt{\frac{\rho}{1 + \rho}}.
\end{equation}
 Using the first two equations in~\eqref{eq:parameters-1}, we also have
\begin{equation}
\label{eq:parameters-3}
  \frac{1}{a} - \frac{1}{b} = \bigg(\frac{\mu}{a} + \frac{\mu}{b} \bigg) \bigg(\frac{1}{a} - \frac{1}{b} \bigg) = \frac{\mu}{a^2} - \frac{\mu}{b^2} = T.
\end{equation}
 From~\eqref{eq:parameters-3} and the last equation in~\eqref{eq:parameters-2}, we deduce that
 $$ \begin{array}{rclrcl}
    \displaystyle \frac{1}{a} - \frac{1}{b} = \frac{1}{a} \bigg(1 - \sqrt{\frac{\rho}{1 + \rho}} \bigg) = T & \hbox{so} &
    \displaystyle a = \frac{1}{T} \bigg(1 - \sqrt{\frac{\rho}{1 + \rho}} \bigg) \vspace*{10pt} \\
    \displaystyle \frac{1}{a} - \frac{1}{b} = \frac{1}{b} \bigg(\sqrt{\frac{1 + \rho}{\rho}} - 1\bigg) = T & \hbox{so} &
    \displaystyle b = \frac{1}{T} \bigg(\sqrt{\frac{1 + \rho}{\rho}} - 1 \bigg). \end{array} $$
 Finally, using the last equation in~\eqref{eq:parameters-1} and the expression of~$a$,
 $$ \mu = \frac{1 + \rho}{T} \bigg(1 - \sqrt{\frac{\rho}{1 + \rho}} \bigg)^2 =  \frac{1}{T} \bigg(\sqrt{1 + \rho} - \sqrt{\rho} \bigg)^2. $$
 This completes the proof.
\end{proof} \\ \\
 From the previous lemma, we can also deduce that, at equilibrium, the fraction of individuals with at least one coin and the fraction of individuals in debt are given by
 $$ \begin{array}{l}
    \displaystyle u^+ = \int_0^{\infty} \mu \,e^{-ac} \,dc = \frac{\mu}{a} =
    \frac{\sqrt{1 + \rho} \,(\sqrt{1 + \rho} - \sqrt{\rho})^2}{\sqrt{1 + \rho} - \sqrt{\rho}} = \sqrt{1 + \rho} \ (\sqrt{1 + \rho} - \sqrt{\rho}) \vspace*{8pt} \\
    \displaystyle u^- = \int_0^{\infty} \mu \,e^{+bc} \,dc = \frac{\mu}{b} =
    \frac{\sqrt{\rho} \,(\sqrt{1 + \rho} - \sqrt{\rho})^2}{\sqrt{1 + \rho} - \sqrt{\rho}} = \sqrt{\rho} \ (\sqrt{1 + \rho} - \sqrt{\rho}). \end{array} $$
 In addition, the fraction of time the bank is empty is
 $$ 1 - p = 1 - \frac{\bar u}{\bar u / p} = 1 - \frac{e^{-a}}{e^{-b}} = 1 - \exp \bigg(\frac{1}{T} \bigg(\sqrt{\frac{1 + \rho}{\rho}} + \sqrt{\frac{\rho}{1 + \rho}}    \bigg) \bigg). $$


\end{document}